\begin{document}

\allowdisplaybreaks

%%%%%%%%%%%%% Set date and title %%%%%%%%%%%%%%%%%%%%%%
\newif\ifdraft 
\drafttrue
%\draftfalse
\newcommand{\DRAFTNUMBER}{3}
%% First draft was started on January 21, 2007
%% First draft completed on July 16, 2007
%% Second draft: minor corrections/additions from arxiv reader comments
%% Third draft: major changes based on suggestions from Patrick Ingram
\newcommand{\DATE}{\today\ \ifdraft(Draft \DRAFTNUMBER)\fi}
\newcommand{\TITLE}{Primitive Divisors in Arithmetic Dynamics}
\newcommand{\TITLERUNNING}{Primitive Divisors in Arithmetic Dynamics}
%%%%%%%%%%%%%%%%%%%%%%%%%%%%%%%%%%%%%%%%%%%%%%%%%%%%%%%%%%%%%%%%%%%%%%

\hyphenation{ca-non-i-cal archi-me-dean}

%%%%%%%%%%%%%%%%%%%%%%%%%%%%%%%%%%%%%%%%%%%%%%%%%%%%%%%%%%%%%%%%%%%%%%
% Theorem environments

\newtheorem{theorem}{Theorem}
\newtheorem{lemma}[theorem]{Lemma}
\newtheorem{conjecture}[theorem]{Conjecture}
\newtheorem{question}[theorem]{Question}
\newtheorem{proposition}[theorem]{Proposition}
\newtheorem{corollary}[theorem]{Corollary}
\newtheorem*{claim}{Claim}

\theoremstyle{definition}
% The * surpresses numbering
%% \newtheorem{question}{Question}
%% \renewcommand{\thequestion}{\Alph{question}} 
\newtheorem*{definition}{Definition}
\newtheorem{example}[theorem]{Example}

\theoremstyle{remark}
\newtheorem{remark}[theorem]{Remark}
\newtheorem*{acknowledgement}{Acknowledgements}

%% \newcommand\claim[2]{\par\vspace{1ex minus 0.5ex}\noindent%
%% \textbf{Claim #1}.\enspace\emph{#2}.\par\noindent\ignorespaces}

%%%%%%%%%%%%%%%%%%%%%%%%%%%%%%%%%%%%%%%%%%%%%%%%%%%%%%%%%%%%%%%%%%%%%%

%%%%%%%% Set Up Environment for Notation %%%%%%%%%%%%%%
% This is currently set to allow quite wide items to be defined
%% Has been reset to a more reasonable width
\newenvironment{notation}[0]{%
  \begin{list}%
    {}%
    {\setlength{\itemindent}{0pt}
     \setlength{\labelwidth}{2\parindent}
     \setlength{\labelsep}{\parindent}
     \setlength{\leftmargin}{3\parindent}
     \setlength{\itemsep}{0pt}
     }%
   }%
  {\end{list}}

%%%%%%%% Set Up Environment for Parts in Theorems %%%%%%%%%%%%%%
\newenvironment{parts}[0]{%
  \begin{list}{}%
    {\setlength{\itemindent}{0pt}
     \setlength{\labelwidth}{1.5\parindent}
     \setlength{\labelsep}{.5\parindent}
     \setlength{\leftmargin}{2\parindent}
     \setlength{\itemsep}{0pt}
     }%
   }%
  {\end{list}}
% Use \Part{(a)}, instead of \item[(a)], to ensure upright font
\newcommand{\Part}[1]{\item[\upshape#1]}

%%%%%%%%%%%%%%%%%%
% Greek Alphabet %
%%%%%%%%%%%%%%%%%%
\renewcommand{\a}{\alpha}
\renewcommand{\b}{\beta}
\newcommand{\g}{\gamma}
\renewcommand{\d}{\delta}
\newcommand{\e}{\epsilon}
\newcommand{\f}{\varphi}
\newcommand{\bfphi}{{\boldsymbol{\f}}}
\renewcommand{\l}{\lambda}
\renewcommand{\k}{\kappa}
\newcommand{\lhat}{\hat\lambda}
\newcommand{\m}{\mu}
\newcommand{\bfmu}{{\boldsymbol{\mu}}}
\renewcommand{\o}{\omega}
\renewcommand{\r}{\rho}
\newcommand{\rbar}{{\bar\rho}}
\newcommand{\s}{\sigma}
\newcommand{\sbar}{{\bar\sigma}}
\renewcommand{\t}{\tau}
\newcommand{\z}{\zeta}

\newcommand{\D}{\Delta}
\newcommand{\G}{\Gamma}
\newcommand{\F}{\Phi}

%%%%%%%%%%%%%%%%%%%%
% Fraktur Alphabet %
%%%%%%%%%%%%%%%%%%%%
\newcommand{\ga}{{\mathfrak{a}}}
\newcommand{\gA}{{\mathfrak{A}}}
\newcommand{\gb}{{\mathfrak{b}}}
\newcommand{\gB}{{\mathfrak{B}}}
\newcommand{\gC}{{\mathfrak{C}}}
\newcommand{\gD}{{\mathfrak{D}}}
\newcommand{\gE}{{\mathfrak{E}}}
\newcommand{\gm}{{\mathfrak{m}}}
\newcommand{\gn}{{\mathfrak{n}}}
\newcommand{\go}{{\mathfrak{o}}}
\newcommand{\gO}{{\mathfrak{O}}}
\newcommand{\gp}{{\mathfrak{p}}}
\newcommand{\gP}{{\mathfrak{P}}}
\newcommand{\gq}{{\mathfrak{q}}}
\newcommand{\gR}{{\mathfrak{R}}}

%%%%%%%%%%%%%%%%%%%
% Barred Alphabet %
%%%%%%%%%%%%%%%%%%%
\newcommand{\Abar}{{\bar A}}
\newcommand{\Ebar}{{\bar E}}
\newcommand{\Kbar}{{\bar K}}
\newcommand{\Pbar}{{\bar P}}
\newcommand{\Sbar}{{\bar S}}
\newcommand{\Tbar}{{\bar T}}
\newcommand{\ybar}{{\bar y}}
\newcommand{\phibar}{{\bar\f}}

%%%%%%%%%%%%%%%%%%%%%%%%%
% Calligraphic Alphabet %
%%%%%%%%%%%%%%%%%%%%%%%%%
\newcommand{\Acal}{{\mathcal A}}
\newcommand{\Bcal}{{\mathcal B}}
\newcommand{\Ccal}{{\mathcal C}}
\newcommand{\Dcal}{{\mathcal D}}
\newcommand{\Ecal}{{\mathcal E}}
\newcommand{\Fcal}{{\mathcal F}}
\newcommand{\Gcal}{{\mathcal G}}
\newcommand{\Hcal}{{\mathcal H}}
\newcommand{\Ical}{{\mathcal I}}
\newcommand{\Jcal}{{\mathcal J}}
\newcommand{\Kcal}{{\mathcal K}}
\newcommand{\Lcal}{{\mathcal L}}
\newcommand{\Mcal}{{\mathcal M}}
\newcommand{\Ncal}{{\mathcal N}}
\newcommand{\Ocal}{{\mathcal O}}
\newcommand{\Pcal}{{\mathcal P}}
\newcommand{\Qcal}{{\mathcal Q}}
\newcommand{\Rcal}{{\mathcal R}}
\newcommand{\Scal}{{\mathcal S}}
\newcommand{\Tcal}{{\mathcal T}}
\newcommand{\Ucal}{{\mathcal U}}
\newcommand{\Vcal}{{\mathcal V}}
\newcommand{\Wcal}{{\mathcal W}}
\newcommand{\Xcal}{{\mathcal X}}
\newcommand{\Ycal}{{\mathcal Y}}
\newcommand{\Zcal}{{\mathcal Z}}

%%%%%%%%%%%%%%%%%%%%%%%%%%%%
% Blackboard Bold Alphabet %
%%%%%%%%%%%%%%%%%%%%%%%%%%%%
\renewcommand{\AA}{\mathbb{A}}
\newcommand{\BB}{\mathbb{B}}
\newcommand{\CC}{\mathbb{C}}
\newcommand{\FF}{\mathbb{F}}
\newcommand{\GG}{\mathbb{G}}
\newcommand{\NN}{\mathbb{N}}
\newcommand{\PP}{\mathbb{P}}
\newcommand{\QQ}{\mathbb{Q}}
\newcommand{\RR}{\mathbb{R}}
\newcommand{\ZZ}{\mathbb{Z}}

%%%%%%%%%%%%%%%%%%%%%%%%%%
% Boldface Math Alphabet %
%%%%%%%%%%%%%%%%%%%%%%%%%%
\newcommand{\bfa}{{\mathbf a}}
\newcommand{\bfb}{{\mathbf b}}
\newcommand{\bfc}{{\mathbf c}}
\newcommand{\bfe}{{\mathbf e}}
\newcommand{\bff}{{\mathbf f}}
\newcommand{\bfg}{{\mathbf g}}
\newcommand{\bfp}{{\mathbf p}}
\newcommand{\bfr}{{\mathbf r}}
\newcommand{\bfs}{{\mathbf s}}
\newcommand{\bft}{{\mathbf t}}
\newcommand{\bfu}{{\mathbf u}}
\newcommand{\bfv}{{\mathbf v}}
\newcommand{\bfw}{{\mathbf w}}
\newcommand{\bfx}{{\mathbf x}}
\newcommand{\bfy}{{\mathbf y}}
\newcommand{\bfz}{{\mathbf z}}
\newcommand{\bfA}{{\mathbf A}}
\newcommand{\bfF}{{\mathbf F}}
\newcommand{\bfB}{{\mathbf B}}
\newcommand{\bfD}{{\mathbf D}}
\newcommand{\bfG}{{\mathbf G}}
\newcommand{\bfI}{{\mathbf I}}
\newcommand{\bfM}{{\mathbf M}}
\newcommand{\bfzero}{{\boldsymbol{0}}}

%%%%%%%%%%%%%%%%%%%%%%%%%%%%%%
% Miscellaneous New Commands %
%%%%%%%%%%%%%%%%%%%%%%%%%%%%%%
\newcommand{\Adele}{\textsf{\upshape A}}
\newcommand{\Aut}{\operatorname{Aut}}
\newcommand{\Br}{\operatorname{Br}}  % Brauer group
\newcommand{\Disc}{\operatorname{Disc}}
\newcommand{\density}{{\boldsymbol\delta}}
\newcommand{\densitysup}{\overline{\density}}
\newcommand{\densityinf}{\underline{\density}}
\newcommand{\Div}{\operatorname{Div}}
\newcommand{\End}{\operatorname{End}}
\newcommand{\Fbar}{{\bar{F}}}
\newcommand{\FOD}{\textup{FOM}}
\newcommand{\FOM}{\textup{FOD}}
\newcommand{\Gal}{\operatorname{Gal}}
\newcommand{\GL}{\operatorname{GL}}
\newcommand{\Index}{\operatorname{Index}}
\newcommand{\Image}{\operatorname{Image}}
\newcommand{\liftable}{{\textup{liftable}}}
\newcommand{\hhat}{{\hat h}}
\newcommand{\Ksep}{K^{\textup{sep}}}
\newcommand{\Ker}{{\operatorname{ker}}}
\newcommand{\Lsep}{L^{\textup{sep}}}
\newcommand{\Lift}{\operatorname{Lift}}
\newcommand{\pp}{\operatorname{pp}}  % preperiodic superscript for Zcal
\newcommand{\vlim}{\operatornamewithlimits{\text{$v$}-lim}}
\newcommand{\wlim}{\operatornamewithlimits{\text{$w$}-lim}}
\newcommand{\MOD}[1]{~(\textup{mod}~#1)}
\newcommand{\Norm}{{\operatorname{\mathsf{N}}}}
\newcommand{\notdivide}{\nmid}
\newcommand{\normalsubgroup}{\triangleleft}
\newcommand{\odd}{{\operatorname{odd}}}
\newcommand{\onto}{\twoheadrightarrow}
\newcommand{\Orbit}{\mathcal{O}}
\newcommand{\ord}{\operatorname{ord}}
\newcommand{\Per}{\operatorname{Per}}
\newcommand{\PrePer}{\operatorname{PrePer}}
\newcommand{\PGL}{\operatorname{PGL}}
\newcommand{\Pic}{\operatorname{Pic}}
\newcommand{\Prob}{\operatorname{Prob}}
\newcommand{\Qbar}{{\bar{\QQ}}}
\newcommand{\rank}{\operatorname{rank}}
\newcommand{\Resultant}{\operatorname{Res}}
\renewcommand{\setminus}{\smallsetminus}
\newcommand{\SL}{\operatorname{SL}}
\newcommand{\Span}{\operatorname{Span}}
\newcommand{\tors}{{\textup{tors}}}
\newcommand{\Trace}{\operatorname{Trace}}
\newcommand{\twistedtimes}{\mathbin{%
   \mbox{$\vrule height 6pt depth0pt width.5pt\hspace{-2.2pt}\times$}}}
\newcommand{\UHP}{{\mathfrak{h}}}    % Upper half plane
\newcommand{\Wreath}{\operatorname{Wreath}}
\newcommand{\<}{\langle}
\renewcommand{\>}{\rangle}

\newcommand{\longhookrightarrow}{\lhook\joinrel\longrightarrow}
\newcommand{\longonto}{\relbar\joinrel\twoheadrightarrow}
\newcommand{\pmodintext}[1]{~\textup{(mod~$#1$)}}

\newcommand{\Spec}{\operatorname{Spec}}
\renewcommand{\div}{{\operatorname{div}}}

\newcounter{CaseCount}
\Alph{CaseCount}
\def\Case#1{\par\vspace{1\jot}\noindent
\stepcounter{CaseCount}
\framebox{Case \Alph{CaseCount}.\enspace#1}
\par\vspace{1\jot}\noindent\ignorespaces}
%%%%%%%%%%%%%%%%%%%%%%%%%%%%%%%%%%%%%%%%%%%%%%%%%%%%%%%%%%%%%%%%%%%%%%

\title[\TITLERUNNING]{\TITLE}
\date{\DATE}

\author[Patrick Ingram]{Patrick Ingram}
\email{pingram@math.utoronto.ca}
\address{Department of Mathematics, University of Toronto,
Toronto, Ontario, M5S 2E4 Canada}
\author[Joseph H. Silverman]{Joseph H. Silverman}
\email{jhs@math.brown.edu}
\address{Mathematics Department, Box 1917
         Brown University, Providence, RI 02912 USA}
\subjclass{Primary: 11B37; Secondary:  11G99, 14G99, 37F10}
%% 11B37 Recurrences
%% 11C08 Polynomials
%% 11R32 Galois theory
%% 11G99 Number Theory: Arithmetic algebraic geometry (Diophantine geometry):
%%       None of the above, but in this section
%% 14G99 Algebraic Geometry: Arithmetic problems. Diophantine geometry: 
%%       None of the above, but in this section
%% 37F10 Complex dynamical systems: Polynomials; rational maps;
\keywords{arithmetic dynamical systems, primitive divisor, Zsigmondy theorem}
\thanks{The second author's research supported by 
NSA H98230-04-1-0064 and NSF DMS-0650017}

\begin{abstract}
Let~$\f(z)\in\QQ(z)$ be a rational function of degree~$d\ge2$
with~$\f(0)=0$ and such that~$\f$ does not vanish to order~$d$ at~$0$.
Let~$\a\in\QQ$ have infinite orbit under iteration of~$\f$ and
write~$\f^n(\a)=A_n/B_n$ as a fraction in lowest terms.  We prove that
for all but finitely many~$n\ge0$, the numerator~$A_n$ has a primitive
divisor, i.e., there is a prime~$p$ such that~$p\mid A_n$
and~$p\notdivide A_i$ for all $i<n$. More generally, we prove an
analogous result when~$\f$ is defined over a number field and~$0$ is a
periodic point for~$\f$.
\end{abstract}

%% Non-TeX abstract
%% Let F(z) be a rational function in Q(z) of degree at least 2 with F(0) = 0 and such that F does not vanish to order d at 0.  Let b be a rational number having infinite orbit under iteration of F, and write F^n(b) = A_n/B_n as a fraction in lowest terms.  We prove that for all but finitely many n > 0, the numerator A_n has a primitive divisor, i.e., there is a prime p such that p divides A_n and p does not divide A_i for all i < n. More generally, we prove an analogous result when F is defined over a number field and 0 is a periodic point for F.
%% Subject Classes - Primary: 11B37; Secondary: 11G99, 14G99, 37F10

\maketitle

%%%%%%%%%%%%%%%%%%%%%%%%%%%%%%%%%%%%%%%%%%%%%%%%%%%%%%%%%%%%%%%%%%%%%%
\section*{Introduction}
Let~$\Acal=(A_n)_{n\ge1}$ be a sequence of integers. 
A prime~$p$ is called a \emph{primitive divisor of~$A_n$} if
\[
  p\mid A_n\qquad\text{and}\qquad p\notdivide A_i\quad\text{for all $1\le i<n$.}
\]
The \emph{Zsigmondy set of~$\Acal$} is the set
\[
  \Zcal(\Acal) = \bigl\{n\ge1 : 
        \text{$A_n$ does not have a primitive divisor}\bigr\}.
\]
A classical theorem of Bang~\cite{bang} (for~$b=1$)
and Zsigmondy~\cite{MR1546236} in general
says that if~$a,b\in\ZZ$ are integers with~$a>b>0$, then 
\[
  \Zcal\bigl((a^n-b^n)_{n\ge1}\bigr)~\text{is a finite set}.
\]
Indeed assuming that~$\gcd(a,b)=1$, they prove
that~$\Zcal\bigl((a^n-b^n)_{n\ge1}\bigr)$ contains no~$n>6$, which is
a strong uniform bound.  This useful result has been extended and
generalized in many ways, for example to more general linear
recursions, to number fields, to elliptic curves, and to Drinfeld
modules,
see~\cite{%
MR1863855,% Bilu and Hanrot and Voutier
MR1503541,% Birkhoff and Vandiver
MR1502458,% Carmichael
MR2220263,% Everest and Mclaren and Ward
arxivNT0609120,% Ghioca and Tucker
hsia:drinfeld,% Hsia
IngramEDSoCC,% Ingram
ingramsilverman,% Ingram and Silverman
MR0223330,% Postnikova and Schinzel
MR0344221,% Schinzel
MR961918% Silverman
}.
\par
In this note we prove a Bang-Zsigmondy result for sequences
associated to iteration of rational functions. For ease of exposition,
we state here a special case of our main result for dynanmical systems
over~$\QQ$. See Theorem~\ref{thm:dynBZ} for the general statement.

\begin{theorem}
\label{thm:finiteZset}
Let~$\f(z)\in\QQ(z)$ be a rational function of degree~$d\ge2$ such
that~$\f(0)=0$, but~$\f$ does not vanish to order~$d$ at~$z=0$.
Let~$\a\in\QQ$ be a point with infinite orbit under iteration
of~$\f$. For each~$n\ge1$, let~$\f^n$ denote the~$n$'th iterate
of~$\f$ and write
\[
  \f^n(\a) = \frac{A_n}{B_n}\in\QQ
\]
as a fraction in lowest terms. Then the dynamical Zsigmondy
set $\Zcal\bigl((A_n)_{n\ge0}\bigr)$ is finite.
\end{theorem}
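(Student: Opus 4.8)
The plan is to use the fact that $0$ is a fixed point to give $(A_n)$ a ``rigid divisibility'' structure, and then to locate primitive divisors by showing that the part of $A_n$ built from new primes grows without bound. Write $\f=f/g$ with $f,g\in\ZZ[z]$ coprime and $\max(\deg f,\deg g)=d$; since $\f(0)=0$ we have $f(0)=0$, $g(0)\ne0$, so $f(z)=z^e u(z)$ with $u(0)\ne0$ and, by hypothesis, $1\le e\le d-1$. Set $\psi(z)=\f(z)/z^e=u(z)/g(z)$, a rational function with $\psi(0)\ne0,\infty$, so that $\f^n(\a)=\f^{n-1}(\a)^e\,\psi\bigl(\f^{n-1}(\a)\bigr)$ (and $\f^{n-1}(\a)\ne0$, else $\a$ would be preperiodic). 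First I would prove a bookkeeping lemma: there is a finite set $\Sigma$ of ``bad'' primes (those dividing a leading coefficient, $u(0)$, $g(0)$, or $\Resultant(f,g)$) such that for $p\notin\Sigma$,
\[
  v_p\bigl(\f^n(\a)\bigr)=e\,v_p\bigl(\f^{n-1}(\a)\bigr)+v_p\bigl(\psi(\f^{n-1}(\a))\bigr),
\]
with $\psi$ a $p$-adic unit whenever $\f^{n-1}(\a)$ reduces to $0$ modulo $p$.

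\smallskip\noindent\textbf{Divisibility structure.}
Because $\f(0)=0$ and $\f$ has good reduction at $p\notin\Sigma$, the residue disc of $0$ is $\f$-stable, so $p\mid A_m\Rightarrow p\mid A_n$ for all $n\ge m$; hence a prime $p\notin\Sigma$ is a \emph{primitive} divisor of $A_n$ exactly when $p\mid A_n$ but $p\nmid A_{n-1}$. From the valuation formula, if $p\nmid A_{n-1}$ and $p\mid A_n$ then $\f^{n-1}(\a)$ reduces mod $p$ to a preimage of $0$ other than $0$ itself --- a root of $u$, or the point $\infty$ when $p\mid B_{n-1}$ --- and in either case $\f^i(\a)\not\equiv0$ for $i<n$, so $p$ is primitive. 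Letting $u_{\mathrm{hom}},g_{\mathrm{hom}}$ be the homogenizations of $u,g$ and $a=\max(0,\deg g-\deg u)$ the order of vanishing of $\psi$ at $\infty$, summing these valuations gives
\[
  \prod_{p\ \text{primitive for}\ A_n}p^{v_p(A_n)}\ \asymp\ |B_{n-1}|^{(a-e)^{+}}\cdot\bigl|u_{\mathrm{hom}}(A_{n-1},B_{n-1})\bigr|,\qquad (a-e)^{+}:=\max(0,a-e),
\]
up to a bounded-exponent correction supported on $\Sigma$, and a further correction for the finitely many $n$ at which a prime of $\Sigma$ is itself primitive. So it suffices to show the prime-to-$\Sigma$ part of the right-hand side tends to $\infty$.

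\smallskip\noindent\textbf{The main estimate.}
Since the orbit of $\a$ is infinite, $h\bigl(\f^n(\a)\bigr)\to\infty$, hence $\max(|A_{n-1}|,|B_{n-1}|)\to\infty$; moreover $u_{\mathrm{hom}}(A_{n-1},B_{n-1})$ is a nonzero integer (it would vanish only if $\f^{n-1}(\a)$ were a preimage of $0$, making $\a$ preperiodic), and $\f^{n-1}(\a)$ is archimedean-close to at most one point of the finite set $\{0,\infty\}\cup\{\text{roots of }u\}$. Tracking which point it is near, in every case but one the right-hand side picks up a factor that tends to infinity: a factor $|B_{n-1}|$ when $a>e$ (and if $u$ has no roots then $\f=u(0)z^e/g(z)$, which forces $|B_{n-1}|\to\infty$ on its own), or, when $a=0$ and $u$ has at least two distinct roots, a factor of size $\gg\max(|A_{n-1}|,|B_{n-1}|)$ coming from a root of $u$ that $\f^{n-1}(\a)$ stays away from. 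In each instance, the only way for the prime-to-$\Sigma$ part of such a factor to stay bounded for infinitely many $n$ is for the orbit of $\a$ to take, infinitely often, an $S$-integral value relative to the relevant point; after the M\"obius change of variable carrying that point to $\infty$, this says a conjugate $\f_\star$ of $\f$ has an orbit meeting the $S$-integers infinitely often. This is impossible by a theorem of Silverman on integral points in orbits of rational maps, provided the orbit is infinite and $\infty$ is not exceptional for $\f_\star$ --- equivalently, the corresponding point of $\{0\}\cup\{\text{roots of }u\}$ is not exceptional for $\f$. Here the hypothesis $e\le d-1$ is decisive: it forces $\f^{-1}(0)\supsetneq\{0\}$, and since the exceptional set of a map of degree $d\ge2$ is a totally invariant set of cardinality $\le2$, this rules out $0$ --- and hence every root of $u$ (a preimage of $0$) --- being exceptional. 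The one remaining possibility is that the relevant point is $\infty$ itself; but when $\infty$ is exceptional one checks $\deg f=d$, whence $a=e$ and the offending exponent $(a-e)^{+}$ equals $0$, so no $S$-integrality statement is needed.

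\smallskip\noindent\textbf{Conclusion, and the hard point.}
The leftover delicate case is $a=0$ with $u$ having a single root $\beta$, of multiplicity $d-e$, where the right-hand side is $\asymp|u_{\mathrm{hom}}(A_{n-1},B_{n-1})|$ with $u_{\mathrm{hom}}(x,y)$ a power of a single linear form --- precisely the situation in which the argument of the previous paragraph (move $\beta$ to $\infty$; apply Silverman; use $e<d$ to know $\beta$ is not exceptional) is indispensable. Indeed the hypothesis cannot be dropped: for $\f(z)=z^2/(z+1)$ one has $e=d=2$, every $\f^n(\a)$ is the reciprocal of an integer, and so $A_n=1$ for all $n$. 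Granting the estimate, the prime-to-$\Sigma$ part of $\prod_{p\ \mathrm{primitive}}p^{v_p(A_n)}$ exceeds $1$ for all large $n$, so $A_n$ has a primitive divisor for all large $n$ and $\Zcal\bigl((A_n)_{n\ge0}\bigr)$ is finite. The main obstacle is exactly this last case: converting ``$\f^{n-1}(\a)$ is abnormally close, at the relevant places, to the preimage $\beta$ of the fixed point'' into a genuine contradiction, which is where the integral-points input --- and, through it, the non--total-ramification hypothesis $e<d$ --- does the essential work.
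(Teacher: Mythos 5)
Your argument is correct in outline, and its local half coincides with the paper's: for primes outside a finite bad set the residue class of the fixed point $0$ is forward invariant and the valuation of the numerator grows by the factor $e=\ord_{z=0}\f$, so a good prime is primitive for $A_n$ precisely when it divides $A_n$ but not $A_{n-1}$ (this is Lemma~\ref{lemma:pdivgrowth}). Where you genuinely diverge is the global step. The paper compares sizes: by \cite[Theorem~E]{MR1240603} the prime-to-$S$ part of $A_n$ has logarithm at least $(1-\e)d^n\hhat_\f(\a)$ for large $n$, while the old-prime part is at most the $e$-th power of the prime-to-$S$ part of $A_{n-1}$, of logarithm at most $ed^{n-1}\hhat_\f(\a)+O(1)$, and $e<d$ makes the difference eventually positive. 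You instead identify the new-prime part of $A_n$ exactly, as the prime-to-$\Sigma$ part of $|B_{n-1}|^{(\deg g-\deg f)^{+}}\,|u_{\mathrm{hom}}(A_{n-1},B_{n-1})|$ (the orbit meeting the other preimages of $0$), and show it cannot be trivial infinitely often: that would make the orbit quasi-$S$-integral with respect to a fixed preimage of $0$ (a root of $u$, or $\infty$ when $\deg g>\deg f$), contradicting Silverman's theorem on $S$-integral points in orbits, which applies because $e<d$ forces $0$ --- and hence, by total invariance of the exceptional set, every preimage of $0$ --- to be non-exceptional. Both routes rest on the same deep input (\cite{MR1240603}, hence Roth), so yours is not more elementary; what it buys is an explicit description of the primitive part of $A_n$ and of where primitive primes come from, while the paper's height-inequality formulation transfers with no extra work to number fields and to periodic rather than fixed $\g$, which is how Theorem~\ref{thm:dynBZ} is proved. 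Minor repairs: your case split is cleaner as a dichotomy (if $\deg g>\deg f$ use $\infty$; otherwise $\deg u=d-e\ge1$ and any root of $u$ works, irrational or repeated roots causing no difficulty after passing to $\QQ(\b)$), the archimedean ``tends to infinity'' discussion is superfluous since only the prime-to-$\Sigma$ part enters the final contradiction, and in your example $\f(z)=z^{2}/(z+1)$ the numerators are powers of the numerator of $\a$ rather than $1$ in general, though the conclusion that no $A_n$ has a primitive divisor still holds.
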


\begin{remark}
Rice~\cite{rice07} investigates primitive divisors in the case that
\text{$\f(z)\in\ZZ[z]$} is a monic polynomial and~$\a\in\ZZ$.  (See
also~\cite{flatters07} for some similar results.)  For example, Rice
proves that if~$\f(z)\ne z^d$, if~$0$ is preperiodic for~$\f$, and
if~$\a\in\ZZ$ has infinite~$\f$-orbit, then
$\Zcal\bigl((\f^n(\a))_{n\ge0}\bigr)$ is finite.  Our
Theorems~\ref{thm:finiteZset} and~\ref{thm:dynBZ} are generalizations
of~\cite{rice07} to arbitrary rational maps over number fields.
(However, we do assume that~$0$ is periodic, while Rice allows~$0$ to
be preperiodic.)
\end{remark}

A key tool in the proof of Theorem~\ref{thm:finiteZset} is a dynamical
analog~\cite{MR1240603} of Siegel's theorem~\cite[IX.3.1]{MR1329092}
for integral points on elliptic curves. Continuing with the notation
from Theorem~\ref{thm:finiteZset}, the dynamical canonical
height~\cite[\S3.4]{silverman:ads} of~$\a$ is the limit
\[
  \lim_{n\to\infty} \frac{\log\max\bigl\{|A_n|,|B_n|\bigr\}}{d^n}
  = \hhat_\f(\a) > 0.
\]
The positivity is a consequence of the fact that~$\a$ has
infinite orbit.  A deeper result, proven in~\cite{MR1240603} as a
consequence of Roth's theorem, implies that
\begin{equation}
  \label{eqn:Andntohhat}
  \lim_{n\to\infty} \frac{|A_n|}{d^n} = \hhat_\f(\a) > 0,
\end{equation}
and an estimate of this sort is needed to prove
Theorems~\ref{thm:finiteZset} and~\ref{thm:dynBZ}. 
\par
Of course, there are many situations in which it is easy
to prove that~\eqref{eqn:Andntohhat}
holds, for example if~$\f(z)\in\ZZ[z]$ and~$\a\in\ZZ$.
In such cases the exact determination of the Zsigmondy set often becomes
an elementary exercise, see Example~\ref{example:emptyZset}
and some of the examples in~\cite{flatters07,rice07}.

\begin{remark}
The first question that one asks about the Zsigmondy set of a
sequence is whether it is finite. Theorems~\ref{thm:finiteZset}
and~\ref{thm:dynBZ} give an affirmative answer for certain sequences
defined by iteration of rational maps on~$\PP^1$. Assuming that the
Zsigmondy sets under consideration are finite, it is also
natural to ask for explicit upper bounds for
\[
  \#\Zcal(\Acal)\qquad\text{and}\qquad \max\Zcal(\Acal),
\]
where one hopes that the bounds depend only minimally on the
sequence. For example, Zsigmondy's original theorem says that for
integers~$a>b>0$, we have~$\max\Zcal(a^n-b^n)\le6$. A recent deep
result of Bilu, Hanrot and Voutier~\cite{MR1863855} extends this to
the statement the~$\max\Zcal(\Lcal)\le30$ for any nontrivial Lucas or
Lehmer sequence~$\Lcal$.  In this paper we are content to prove the
finiteness of certain dynamical Zsigmondy sets. We leave the question
of explicit and/or uniform bounds as a problem for future study.
\end{remark}

\begin{remark}
Tom Tucker has pointed out to the authors that the results of this
paper should be valid also for iteration of non-split functions
over~$\CC(T)$, and more generally over one-dimensional function fields
of characteristic~$0$, since in this setting
Benedetto~\cite{arxiv0510444} (for polynomial maps) and
Baker~\cite{arxiv0601046} (for rational maps) have recently proven
that points with infinite orbit have strictly positive canonical
height.
\end{remark}

The material in this article is divided into two sections.  In
Section~\ref{section:dynZsigthm} we state and prove our main theorem
via a sequence of lemmas, some of which may be of independent
interest.  Section~\ref{section:speculations} discusses variants of
our main theorem and raises questions, makes conjectures, and
indicates directions for further research.

\begin{acknowledgement}
The authors would like to thank Rob Benedetto for sketching the
construction described in Remark~\ref{remark:localvsglobal}, and
Graham Everest, Igor Shparlinski and Tom Tucker for their helpful
comments on the initial draft of this paper.
\end{acknowledgement}

%%%%%%%%%%%%%%%%%%%%%%%%%%%%%%%%%%%%%%%%%%%%%%%%%%%%%%%%%%%%%%%%%%%%%%
\section{A dynamical Zsigmondy theorem}
\label{section:dynZsigthm}

In this section we state and prove our main theorem concerning
primitive divisors in sequences defined by iteration of certain types
of rational functions. We start by recalling that primitive divisors
in number fields are most appropriately defined using ideals, rather
than elements.

\begin{definition}
Let~$K$ be a number field and let~$\Acal=(\gA_n)_{n\ge1}$ be a sequence
of nonzero integral ideals. 
A prime ideal~$\gp$ is called
a \emph{primitive divisor of~$\gA_n$} if
\[
  \gp\mid \gA_n\qquad\text{and}\qquad 
  \gp\notdivide \gA_i\quad\text{for all $1\le i<n$.}
\]
The \emph{Zsigmondy set of~$\Acal$} is the set
\[
  \Zcal(\Acal) = \bigl\{n\ge1 : 
        \text{$\gA_n$ does not have a primitive divisor}\bigr\}.
\]
\end{definition}

We also recall some basic definitions from dynamical systems.

\begin{definition}
Let~$\f(z)\in K(z)$ be a rational function of degree~$d\ge2$,
which we may view as a morphism~\text{$\f:\PP^1_K\to\PP^1_K$}.  
\par
A point~$\g\in\PP^1(\Kbar)$ is \emph{periodic for~$\f$}
if~$\f^n(\g)=\g$ for some~$n\ge1$.  The smallest such~$n$ is called
the~\emph{$\f$-period of~$\g$}. A point of~$\f$-period~$1$ is
called a \emph{fixed point}.
\par
Similarly, we say that~$\g$ is \emph{preperiodic}
if~$\f^{m+n}(\g)=\f^m(\g)$ for some~$n\ge1$
and~$m\ge0$. Equivalently,~$\g$ is preperiodic if its
\emph{$\f$-orbit} $\bigl\{\a,\f(\a),\f^2(\a),\dots\bigr\}$ is finite.
\par
A point that is not preperiodic, i.e., that has infinite $\f$-orbit,
is called a \emph{wandering point}.
\par
Let~$\g$ be a point of $\f$-period~$k$. We say that~$\f$ is
\emph{of polynomial type at~$\g$} if
\begin{equation}
  \label{fngzgdpsiz}
  \f^k(z) = \g + \frac{(z-\g)^d}{\psi(z)}
  \quad\text{for some~$\psi(z)\in K[z]$ with $\psi(\g)\ne0$.}
\end{equation}
\end{definition}

\begin{remark}
A more intrinsic algebro-geometric definition is that~$\f$ is of
polynomial type at~$\g$ if the map~$\f^k:\PP^1\to\PP^1$ is totally
ramified at~$\g$. Equivalently, if~$\f(z)$ has the
form~\eqref{fngzgdpsiz} and if we move~$\g$ to~$\infty$ by
conjugating~$\f$ by the linear fractional transformation
$f(z)=1/(z-\g)$, then the following calculation
shows that~$\f^k$ becomes a polynomial:
\begin{align*}
  (f\circ\f^k\circ f^{-1})(z)
   &= \frac{1}{\f^k(f^{-1}(z))-\g}
   = \frac{1}{(f^{-1}(z)-\g)^d/\psi(f^{-1}(z))}\\
   &= \frac{1}{z^{-d}/\psi(z^{-1}+\g)}
   = z^d\psi(z^{-1}+\g)
  \in K[z].
\end{align*}
\end{remark}

\begin{remark}
It is an easy exercise using the Riemann-Hurwitz genus formula to show
that if~$\f$ is of polynomial type at~$\g$, then the~$\f$-period
of~$\g$ is at most~$2$, cf.~\cite[Theorem~1.7]{silverman:ads}.  We
will not need to use this fact, but mention it because it provides an
easy way to check if there exist any points at which a given map~$\f$
is of polynomial type.
\end{remark}

\begin{theorem}
\label{thm:dynBZ}
Let~$K$ be a number field and let~$\f(z)\in K(z)$ be a rational
function of degree~$d\ge2$.  Let~$\g\in K$ be a periodic point
for~$\f$ such that~$\f$ is not of polynomial type at~$\g$.  Let~$\a\in
K$ be a wandering point, i.e., a point with infinite~$\f$-orbit, and
for each~$n\ge1$, write the ideal
\[
  \bigl(\f^n(\a)-\g\bigr) = \gA_n\gB_n^{-1}
\]
as a quotient of relatively prime integral
ideals. \textup(If~$\f^n(\a)=\infty$, then set \text{$\gA_n=(1)$}
and~$\gB_n=(0)$.\textup) Then the dynamical Zsigmondy set
$\Zcal\bigl((\gA_n)_{n\ge1}\bigr)$ is finite.
\end{theorem}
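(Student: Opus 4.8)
The strategy is to show that for all large $n$, the ideal $\gA_n$ has a prime factor that does not divide any earlier $\gA_i$. I would start by reducing to the case where $\g$ is a fixed point: since $\g$ has some period $k$, one can replace $\f$ by $\f^k$, noting that $\a$ still has infinite orbit under $\f^k$, that $\f^k$ is not of polynomial type at $\g$ (by hypothesis), and that the Zsigmondy set for the original $\f$ differs from that of $\f^k$ (applied along each of the $k$ residue classes) by only finitely many indices. So assume $\f(\g)=\g$. After a linear fractional change of coordinates defined over $K$ we may also assume $\g=0$, so $\f(0)=0$ and $\f$ is not totally ramified at $0$; write $\f(z) = a_e z^e(1 + \cdots)/(1 + \cdots)$ near $z=0$ with the key point being that $e=\ord_{z=0}\f(z)$ satisfies $1\le e\le d$, and $e<d$ because $\f$ is not of polynomial type at $0$. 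Actually $e$ can be anything from $1$ to $d-1$; the multiplier case $e=1$ and the higher-order case $e\ge2$ may need slightly different bookkeeping but the same engine.

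\textbf{The local analysis.} The heart is a valuation estimate: for each prime $\gp$ of $K$ and each $n$, I want to control $\ord_\gp \gA_n = -\min\{0,v_\gp(\f^n(\a))\}$ in terms of $\ord_\gp \gA_{n-1}$ (or $\gA_i$ for small $i$). Working $\gp$-adically: if $v_\gp(\f^n(\a)) > 0$ is large (so $\gp\mid\gA_n$ to high order), then $\f^n(\a)$ is $\gp$-adically close to the fixed point $0$, and in a $\gp$-adic neighborhood of $0$ the map $\f$ behaves like $z\mapsto a_e z^e$ up to units, so $v_\gp(\f^n(\a)) = e\cdot v_\gp(\f^{n-1}(\a)) + (\text{bounded error from the coefficients of }\f)$, valid once $v_\gp(\f^{n-1}(\a))$ exceeds a bound depending only on $\f$. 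Telescoping this, a prime $\gp$ that divides $\gA_n$ but divides no $\gA_i$ with $i<n$ to any large order can occur only if $\ord_\gp\gA_n$ is itself bounded by a constant $C(\f,\gp)$ — more precisely, the "non-primitive part" of $\gA_n$ is controlled: if $\gp\mid\gA_n$ and $\gp\mid\gA_i$ for some $i<n$, then in fact $\gp$ already appeared by a bounded stage, and $\ord_\gp\gA_n$ is comparable to $e^{n-i}$ times an earlier exponent. This should yield an inequality of the shape
\[
  \log\Norm(\text{non-primitive part of }\gA_n) \le C_1 \cdot e^{n} + C_2
\]
with $C_1$ depending on $\f$ but, crucially, with $C_1$ small — in fact one can arrange $\log\Norm(\text{non-primitive part}) \le (1+o(1))\log\Norm\gA_{n-1} \cdot (e/d)\cdot(\cdots)$, the point being a multiplicative slack because $e<d$.

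\textbf{The global finish.} The contribution balancing this is the lower bound on the size of $\gA_n$ itself. Here I invoke the dynamical Siegel-type theorem: the estimate~\eqref{eqn:Andntohhat} (in its number-field form proved in~\cite{MR1240603}) gives
\[
  \log\Norm\gA_n = d^n\,\hhat_\f(\a) + o(d^n),\qquad \hhat_\f(\a)>0,
\]
using that $\a$ is a wandering point so the canonical height is strictly positive. (One must also check $\log\Norm\gB_n = o(d^n)$, equivalently that $\f^n(\a)$ does not approach $\infty$ too fast; this follows because $0$ rather than $\infty$ is the periodic point being tracked, together with the same Roth/Siegel input applied at the pole.) Comparing: if $\gA_n$ had no primitive divisor, then $\gA_n$ equals its non-primitive part, so $d^n\hhat_\f(\a)(1+o(1)) \le C_1 e^{n} + C_2$; since $e\le d-1 < d$ and $\hhat_\f(\a)>0$, the left side dominates for large $n$, a contradiction. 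Hence $n\in\Zcal\bigl((\gA_n)_{n\ge1}\bigr)$ for only finitely many $n$.

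\textbf{Main obstacle.} The delicate point is making the local estimate uniform over all primes $\gp$ simultaneously — for the finitely many $\gp$ dividing the "bad" data (coefficients of $\f$, the resultant/discriminant controlling when $\f$ has good reduction, primes in the denominator of $\a$ or dividing $a_e$) one needs an explicit additive constant, while for all other primes the estimate $v_\gp(\f^n(\a)) = e\,v_\gp(\f^{n-1}(\a))$ must be exact whenever the left side is positive. Packaging this into a clean bound on $\sum_\gp (\text{non-primitive }\ord_\gp\gA_n)\log\Norm\gp$ that genuinely grows like $e^n$ rather than $d^n$, and not losing the gap between $e$ and $d$ through sloppy error terms, is where the real work lies; I expect to isolate it as one or two standalone lemmas (a local lemma on iteration near a non-totally-ramified fixed point, and a global counting lemma) before assembling the proof.
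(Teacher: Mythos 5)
Your overall route is the same as the paper's: reduce to the case where $\g=0$ is a fixed point, prove a local lemma saying that away from finitely many bad primes the valuation of the numerator is multiplied by exactly $e=\ord_{z=0}\f$ at each step after the prime first appears, obtain the global growth $\log\Norm\gA_n=(1+o(1))[K:\QQ]\,d^n\hhat_\f(\a)$ with $\hhat_\f(\a)>0$ from the Diophantine approximation result of~\cite{MR1240603}, and conclude because $e<d$. Two steps, however, need repair. First, the reduction from periodic to fixed $\g$ is not just ``apply the fixed-point case in each residue class mod $k$ and note the Zsigmondy sets differ by finitely many indices'': a prime that is primitive \emph{within} the subsequence $(\gA_{nk+i})_{n\ge0}$ could still divide an earlier term $\gA_{mk+j}$ with $j\ne i$, so it need not be primitive in the full sequence. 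You must rule out such cross-class divisibility; the paper does this by restricting to primes $\gp$ of good reduction with $\f^i(\g)\not\equiv\g\pmod{\gp}$ for $0<i<k$ and showing that any such $\gp$ can divide terms of at most one residue class (two classes would force $\f^{i-j}(\g)\equiv\g\pmod{\gp}$). Since only finitely many primes are excluded, primitivity in a subsequence then upgrades to primitivity in the full sequence for all large $n$. Some argument of this kind is genuinely needed and is missing from your sketch.

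Second, your displayed estimate $\log\Norm(\text{non-primitive part of }\gA_n)\le C_1e^n+C_2$ is false in general: the exponent with which a prime first appears is not bounded independently of the prime or of the stage (nothing prevents $\gA_{n-1}$ from being essentially a large power of a single new prime), so the non-primitive part of $\gA_n$ can be comparable to $e$ times the full size of $\gA_{n-1}$, i.e. roughly $e\,d^{n-1}[K:\QQ]\hhat_\f(\a)$, which dwarfs $e^n$. Your hedged alternative is the correct one and is what the paper uses: away from the finitely many primes dividing $a_eb_0$, the non-primitive part of $\gA_n$ is exactly $\gA_{n-1,S}^e$, hence its log-norm is at most $e\log\Norm\gA_{n-1}\le e\,d^{n-1}[K:\QQ]\hhat_\f(\a)+O(1)$, and comparison with the lower bound $(1-\e)[K:\QQ]d^n\hhat_\f(\a)$ still gives the contradiction since $e\le d-1$. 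Finally, the parenthetical requirement that $\log\Norm\gB_n=o(d^n)$ is neither needed nor true in general; what the argument requires is that the archimedean (and $S$-adic) proximity of $\f^n(\a)$ to $0$ is $o(d^n)$, which is precisely \cite[Theorem~E]{MR1240603} and is where the hypothesis that $\f$ is not of polynomial type at $\g$ enters.
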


\begin{remark}
The assumption in Theorem~\ref{thm:dynBZ} that~$\f$ is not of
polynomial type at~$\g$ is a necessary condition.  For example,
let~$F(z)\in \ZZ[z]$ be any polynomial of degree at most~$d$ with~$F(0)=1$
and consider the rational map
\[
%%  \f(z)=\frac{z^d}{F(z)}.
  \f(z)=z^d/F(z).
\]
Then~$\f$ is of polyomial type at~$\g=0$, and an easy calculation shows
that for any starting value~$\a=A_1/B_1\in\QQ$, we have~$A_n=A_1^{d^n}$
for all~$n\ge0$. Hence~$A_n$ has no primitive divisors for
any~$n\ge1$.
\end{remark}

\begin{proof}
The proof of  Theorem~\ref{thm:dynBZ}
is structured as a series of lemmas that provide the necessary tools.

\begin{lemma}
If Theorem~$\ref{thm:dynBZ}$ is true when~$\g$ is a fixed point
of~$\f$, then it is true when~$\g$ is a periodic point of~$\f$.
\end{lemma}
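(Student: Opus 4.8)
The plan is to pass to the iterate $\Phi:=\f^k$, where $k$ is the $\f$-period of $\g$, so that $\g$ becomes a fixed point of $\Phi$, and then to reassemble the Zsigmondy information from the $k$ sub-orbits $\beta_r:=\f^r(\a)$ with $0\le r<k$. First I would verify that each triple $(\Phi,\g,\beta_r)$ satisfies the hypotheses of the fixed-point case of Theorem~\ref{thm:dynBZ}: one has $\deg\Phi=d^k\ge2$; each $\beta_r$ is a wandering point for $\Phi$, since a finite $\Phi$-orbit of some $\beta_r$ would force a repetition among $\f^r(\a),\f^{r+k}(\a),\f^{r+2k}(\a),\dots$ and hence make $\a$ preperiodic for $\f$; and $\Phi$ is not of polynomial type at the fixed point $\g$, because $\f$ is of polynomial type at $\g$ exactly when $\f^k=\Phi$ is totally ramified at $\g$ (see the remark following the definition), which is precisely the statement that $\Phi$ is of polynomial type at its fixed point $\g$. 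Since $\Phi^m(\beta_r)-\g=\f^{mk+r}(\a)-\g$, the numerator ideal of $\Phi^m(\beta_r)-\g$ equals $\gA_{mk+r}$. Applying the fixed-point case once for each $r\in\{0,\dots,k-1\}$ therefore gives: for all but finitely many pairs $(m,r)$ with $m\ge1$ there is a prime $\gp$ dividing $\gA_{mk+r}$ but dividing none of $\gA_{k+r},\gA_{2k+r},\dots,\gA_{(m-1)k+r}$ --- call such a $\gp$ a divisor of $\gA_{mk+r}$ that is \emph{primitive along the progression $r\bmod k$}.

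The substance of the argument is to promote such a $\gp$ to a genuine primitive divisor of $\gA_{mk+r}$ in the full sequence $(\gA_n)_{n\ge1}$. Let $S$ be the finite set of primes of $K$ consisting of the primes of bad reduction of $\f$, the primes dividing a denominator of $\g$, the primes dividing one of the ideals $\gA_1,\dots,\gA_{k-1}$, and the primes $\gp$ for which $\tilde\g$ is not periodic of exact period $k$ for the reduced map $\tilde\f$; this last collection is finite because for each proper divisor $k'$ of $k$ the point $\f^{k'}(\g)$ is either $\infty$ or a finite point of $K$ distinct from $\g$, so $\tilde\f^{k'}(\tilde\g)=\tilde\g$ can hold for only finitely many $\gp$. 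Now suppose $\gp\notin S$. Then reduction modulo $\gp$ commutes with iteration of $\f$ and $\tilde\g$ has exact $\tilde\f$-period $k$, so whenever $\gp\mid\gA_j$ and $\gp\mid\gA_n$ with $j<n$ we get $\tilde\f^j(\tilde\a)=\tilde\g$, hence $\tilde\f^{n-j}(\tilde\g)=\tilde\f^n(\tilde\a)=\tilde\g$, and therefore $k\mid n-j$. In other words, a prime outside $S$ can divide terms $\gA_n$ only within a single residue class of $n$ modulo $k$.

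Combining the two observations: for all but finitely many $n$, discarding the finitely many $n<k$, write $n=mk+r$ with $0\le r<k$ and $m\ge1$, and choose a divisor $\gp$ of $\gA_n$ that is primitive along the progression $r\bmod k$, taking $\gp\notin S$ whenever $\gA_n$ admits such a choice. If $\gp\notin S$, then for every $j<n$ with $\gp\mid\gA_j$ the previous paragraph forces $j\equiv r\pmod k$; such a $j$ is either equal to $r$, in which case $\gp\nmid\gA_r$ since $\gp\notin S$, or equal to $ik+r$ with $1\le i<m$, in which case $\gp\nmid\gA_{ik+r}$ by primitivity along the progression; so $\gp\nmid\gA_j$ for all $j<n$ and $\gp$ is a primitive divisor of $\gA_n$. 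The only remaining $n$ are those for which every divisor of $\gA_n$ primitive along its progression lies in $S$; but for a fixed $\gp\in S$ and fixed $r$, being primitive along the progression $r\bmod k$ requires $\gp\nmid\gA_{ik+r}$ for all $1\le i<m$, so $\gp$ can play this role for at most one term of that progression, giving at most $(\#S)\cdot k$ such $n$. Together with the finitely many exceptions produced by the $k$ applications of the fixed-point case, this shows that $\Zcal\bigl((\gA_n)_{n\ge1}\bigr)$ is finite.

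I expect the delicate step to be exactly this last piece of bookkeeping. The fixed-point case only controls primitivity separately within each arithmetic progression modulo $k$, so one must rule out a prime being ``recycled'' from one progression into another. The reduction-modulo-$\gp$ argument disposes of this for every prime outside the finite set $S$, and each prime of $S$ can obstruct genuine primitivity for at most one $n$ per progression, hence contributes only finitely many further exceptions.
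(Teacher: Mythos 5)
Your proof is correct and takes essentially the same route as the paper: split the orbit into the $k$ arithmetic progressions $n\equiv r\pmodintext{k}$, apply the fixed-point case to $\f^k$ with initial points $\f^r(\a)$, and use good reduction together with the exact period of $\g$ modulo $\gp$ to show that primes outside a finite bad set can divide terms in only one progression, then finish by finite bookkeeping. Your explicit count of at most $(\#S)\cdot k$ extra exceptions (and the inclusion of the primes dividing $\gA_1,\dots,\gA_{k-1}$ in $S$) merely spells out what the paper compresses into ``increasing $N$ if necessary, we may assume $\gp_{n,i}\in\Pcal$.''
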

\begin{proof}
Suppose that~$\g$ has~$\f$-period~$k\ge2$, so~$\f^k(\g)=\g$ and no smaller
iterate of~$\f$ fixes~$\g$. For each~$0\le i<k$ we consider the subsequence
\[
  \bigl(\f^{nk+i}(\a)-\g\bigr) = \gA_{nk+i}^{\vphantom1}\gB_{nk+i}^{-1}
  \quad\text{for~$n=0,1,2,\dots\,$.}
\]
We claim that these subsequences have very few common prime divisors.
More precisely, define the set of good primes~$\Pcal=\Pcal_{\f,\a,\g}$
to be primes satisfying the following two conditions:
\begin{enumerate}
\item[(A)]
$\f$ has good reduction at~$\gp$. (See~\cite[Chapter~2]{silverman:ads}
for the definition and basic properties of maps with good reduction.)
\item[(B)]
$\f^i(\g)\not\equiv\g\pmodintext{\gp}$ for all $0\le i<k$.
\end{enumerate}
It is clear that~$\Pcal$ contains all but finitely many primes.
Now suppose that some~$\gp\in\Pcal$ divides terms in different
subsequences, say
\begin{equation}
  \label{eqn:gpgAnki}
  \gp\mid \gA_{nk+i}
  \quad\text{and}\quad
  \gp\mid \gA_{mk+j}
  \quad\text{for  some $0\le j<i<k$.}
\end{equation}
Note that the good reduction assumption means
that~$(\f\bmod\gp)^n=(\f^n)\bmod\gp$, i.e., reduction modulo~$\gp$
commutes with composition of~$\f$
(see~\cite[Theorem~2.18]{silverman:ads}). 
Hence if~$\gp$ is a prime of good reduction for~$\f$, then
we have
\[
  \gp\mid\gA_n\quad\Longleftrightarrow\quad
  \f^n(\a)\equiv\g\pmodintext\gp.
\]
So we can rewrite assumption~\eqref{eqn:gpgAnki} as
\[
  \f^{nk+i}(\a) \equiv \f^{mk+j}(\a) \equiv \g \pmod\gp.
\]
Suppose first that~$nk+i>mk+j$.  Since~$0\le j<i\le k$ by assumption,
this implies that~$n\ge m$.  We compute
\begin{align*}
  \g &\equiv \f^{nk+i}(\a) \pmodintext\gp \\
  &= \f^{(nk+i)-(mk+j)}\bigl(\f^{mk+j}(\a)\bigr) \\
  &\equiv \f^{(nk+i)-(mk+j)}(\g) \pmodintext\gp \\
  &= \f^{i-j}\bigl( (\f^k)^{n-m}(\g) \bigr) \\
  &= \f^{i-j}(\g) \qquad\text{since $\f^k(\g)=\g$.}
\end{align*}
But~$0<i-j<k$, so this contradicts Property~(B) of~$\Pcal$.
\par
Similarly, if~$nk+i < mk+j$, then~$m>n$ (since~$i>j$), so we have
\begin{align*}
  \g &\equiv \f^{mk+j}(\a) \pmodintext\gp \\
  &= \f^{(mk+j)-(nk+i)}\bigl(\f^{nk+i}(\a)\bigr) \\
  &\equiv \f^{(mk+j)-(nk+i)}(\g) \pmodintext\gp \\
  &= \f^{j-i+k}\bigl( (\f^k)^{m-n-1}(\g) \bigr) \\
  &= \f^{j-i+k}(\g) \qquad\text{since $\f^k(\g)=\g$.}
\end{align*}
This is again a contradiction of Property~(B), since~$0<j-i+k<k$.
\par
We have now proven that for primes~$\gp\in\Pcal$, at most one of
the subsequences
\[
  (\gA_{nk+i})_{n\ge0},\qquad i=0,1,\ldots,k-1,
\]
has a term divisible by~$\gp$. 
\par
We are assuming that Theorem~\ref{thm:dynBZ} is true if~$\g$ is a
fixed point. It follows that for each~$0\le i<k$, the Zsigmondy
set~$\Zcal\bigl((\gA_{nk+i})_{n\ge0}\bigr)$ is finite,
since~$(\gA_{nk+i})_{n\ge0}$ is the sequence associated to the map~$\f^k$, the
initial point~$\f^i(\a)$, and the fixed point~$\g$ of~$\f^k$.  
(Note that the condition on~$\f$ is not a polynomial at~$\g$ is equivalent
to the condition that~$\f^k$ is not of polynomial type at~$\g$.)
\par
It follows that we can find a number~$N$ so that
for all~$n\ge N$ and all~$0\le i<k$ there is a prime ideal~$\gp_{n,i}$
satisfying
\[
  \gp_{n,i} \mid \gA_{nk+i}
  \quad\text{and}\quad
  \gp_{n,i} \notdivide \gA_{mk+i}
  \quad\text{for all $0\le m<n$.}
\]
In other words, for a fixed~$i$, the ideals~$\gp_{n,i}$ are primitive
divisors in the subsequence~$(\gA_{nk+i})_{n\ge0}$. Increasing~$N$ if
necessary, we may assume that~$\gp_{n,i}\in\Pcal$ for all~$n$ and
all~$i$, since the complement of~$\Pcal$ is finite.
\par
It is now clear that for~$n>N$ and~$0\le i<k$, the prime~$\gp_{n,i}$
is a primitive divisor of~$\gA_{nk+i}$ in the full
sequence~$(\gA_m)_{m\ge0}$. This is true because it is a primitive
divisor in its own subsequence, and we proved above that it does not
divide any of the terms in any of the other subsequences.
\end{proof}

We next reduce to the case~$\g=0$, which will simplify our later
computations.

\begin{lemma}
It suffices to prove Theorem~$\ref{thm:dynBZ}$
under the assumption that \text{$\g=0$}.
\end{lemma}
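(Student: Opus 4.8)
The plan is to conjugate~$\f$ by the translation that carries the periodic point~$\g$ to the origin, and then to check that this coordinate change preserves every hypothesis of Theorem~\ref{thm:dynBZ} while leaving the ideal sequence~$(\gA_n)_{n\ge1}$ unchanged.

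Let $f(z)=z-\g$, regarded as an element of~$\PGL_2(K)$, and set $\F=f\circ\f\circ f^{-1}$, so that $\F(z)=\f(z+\g)-\g$ and, more generally, $\F^n=f\circ\f^n\circ f^{-1}$ for every~$n\ge1$. First I would record the routine structural facts. Conjugation by a linear fractional transformation preserves the degree, so $\F\in K(z)$ is again a rational function of degree~$d$. If~$\g$ has exact~$\f$-period~$k$, then $\F^k(0)=f\bigl(\f^k(\g)\bigr)=f(\g)=0$, and no smaller iterate of~$\F$ fixes~$0$, so~$0$ is a periodic point of~$\F$ of exact period~$k$. Finally, conjugation by an element of~$\PGL_2$ preserves ramification indices, so $\F^k$ is totally ramified at~$0$ if and only if~$\f^k$ is totally ramified at~$\g$; by the intrinsic characterization of ``polynomial type'' recorded in the remark following the definition, this means~$\F$ is of polynomial type at~$0$ if and only if~$\f$ is of polynomial type at~$\g$. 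Since~$\f$ is not of polynomial type at~$\g$, the map~$\F$ is not of polynomial type at~$0$.

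Next I would transport the wandering point and compare the ideal sequences. Put $\a'=\a-\g\in K$; then $\F^n(\a')=\f^n(\a)-\g$, so the $\F$-orbit of~$\a'$ is the image of the $\f$-orbit of~$\a$ under the bijection $z\mapsto z-\g$ and is therefore infinite, i.e.~$\a'$ is a wandering point for~$\F$. Moreover, for every~$n\ge1$ we have the equality of principal ideals
\[
  \bigl(\F^n(\a')-0\bigr)=\bigl(\f^n(\a)-\g\bigr)=\gA_n\gB_n^{-1},
\]
with $\F^n(\a')=\infty$ precisely when $\f^n(\a)=\infty$, so the exceptional convention matches as well. Hence the relatively prime factorization is the same, and the integral ideals~$\gA_n$ attached to the triple~$(\F,\a',0)$ coincide with those attached to~$(\f,\a,\g)$. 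Applying the case~$\g=0$ of Theorem~\ref{thm:dynBZ} to~$\F$, $\a'$, and the periodic point~$0$ therefore shows that $\Zcal\bigl((\gA_n)_{n\ge1}\bigr)$ is finite.

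There is essentially no obstacle here: the only inputs requiring comment are the invariance of degree and of total ramification under $\PGL_2$-conjugation, both of which are standard, and it is exactly these facts that make ``polynomial type'' a coordinate-free notion. The real content of Theorem~\ref{thm:dynBZ} remains to be established in the normalized case~$\g=0$, which is where the later lemmas will operate.
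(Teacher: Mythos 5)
Your proposal is correct and is essentially the paper's own argument: both conjugate $\f$ by the translation moving $\g$ to $0$, transport $\a$ to $\a-\g$, observe that the ideal sequence $(\gA_n)$ is literally unchanged, and note that ``polynomial type'' is preserved under the conjugation. Your extra remarks (exact period preserved, total ramification as the coordinate-free criterion) are correct elaborations of what the paper states more briefly.
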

\begin{proof}
Let~$f(z)=z+\g$. Then we have
\[
  \f^n(\a) - \g = f^{-1}\bigl(\f^n(\a)\bigr)
  = (f^{-1}\circ\f\circ f)^n\bigl(f^{-1}(\a)\bigr).
\]
Hence replacing~$\f$ by~$f^{-1}\circ\f\circ f$ and replacing~$\a$
by~$f^{-1}(\a)$ allows us to replace~$\g$ wtih~$f^{-1}(\g)=0$.
Finally, we note that~$\f$ is of polynomial type at~$\g$ if and
only if~$f^{-1}\circ\f\circ f$ is of polynomial type at~$f^{-1}(\g)=0$,
so the conjugated map has the required property.
\end{proof}

We are now reduced to the case that~$\g=0$ is a fixed point
of~$\f(z)$. This means that we can write~$\f$ in the form
\begin{equation} 
  \label{eqn:fza1zb0}
  \f(z) = \frac{a_ez^e+a_{e+1}z^{e+1}+\cdots+a_dz^d}
        {b_0+b_1z+b_2z^2+\cdots+b_dz^d},
\end{equation}
with~$a_e\ne0$, where without loss of generality we may assume that
all~$a_i$ and~$b_i$ are in the ring of integers~$R$ of~$K$.  Further,
since~$\deg\f=d$, we know that at least one of~$a_d$ and~$b_d$ is
nonzero, and also~$b_0\ne0$.  Finally, our assumptions that~$\f(0)=0$
and that~$\f$ is not of polynomial type at~$0$ imply that
\begin{equation}
  \label{0lteltd}
  0 < e < d.
\end{equation}
Note the strict inequalities on both sides.
\par
We next prove an elementary, but useful, lemma that bounds how
rapidly the~$\gp$-divisibilty of~$\gA_n$ can grow, where recall
that~$\gA_n$ is the integral ideal obtained by writing
\[
  \bigl(\f^n(\a)\bigr) = \gA_n\gB_n^{-1}
\]
as a quotient of relatively prime integral ideals. In order to
state the result, we need one definition.

\begin{definition}
For each prime ideal~$\gp$, we define the \emph{rank of apparition}
(\emph{of~$\f$ and~$\a$}) \emph{at~$\gp$} to be the integer
\[
  r_\gp = \min\{r\ge0 : \ord_\gp\gA_r > 0\}.
\]
(If no such~$r$ exists, we set~$r_\gp=\infty$.)  Notice that this is a
direct analogy of Ward's definition~\cite{MR0023275} of the rank of
apparition for with elliptic divisibility sequences.
\end{definition}

\begin{lemma}
\label{lemma:pdivgrowth}
With notation as above, let
\[
  S = \bigl\{\gp : \ord_\gp(a_eb_0)\ne0\bigr\}.
\]
Then for all primes~$\gp\notin S$, 
\begin{align}
  k &\le r_\gp &\Longrightarrow&& \ord_\gp\gA_{k-1} &= 0,
         \label{eqn:ordgAsmall}  \\  
  k &> r_\gp   &\Longrightarrow&& \ord_\gp\gA_{k}&=e\ord_\gp\gA_{k-1},
         \label{eqn:ordgAbig}  
\end{align}
where~$e$ is the order of vanishing of~$\f(z)$ at~$z=0$,
see~\eqref{eqn:fza1zb0}.
\end{lemma}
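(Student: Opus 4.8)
The plan is to work $\gp$-adically and track the valuation of $\f^n(\a)$ as we apply $\f$ one step at a time, using the explicit form~\eqref{eqn:fza1zb0} of $\f$ together with the assumption $\gp\notin S$, which says $\ord_\gp(a_e)=\ord_\gp(b_0)=0$. Write $x=\f^{k-1}(\a)$ and $\f(x)=\f^k(\a)$, and set $v=\ord_\gp(x)$. The key observation is that for $\gp\notin S$ the numerator $a_ex^e+\cdots+a_dx^d$ has $\gp$-adic valuation exactly $ev$ whenever $v>0$ (the term $a_ex^e$ strictly dominates all higher terms, since $\ord_\gp(a_e)=0$ and $\ord_\gp(a_ix^i)\ge i v> ev$ for $i>e$), while the denominator $b_0+b_1x+\cdots+b_dx^d$ has valuation exactly $0$ (the constant term $b_0$ dominates, since $\ord_\gp(b_0)=0$ and $\ord_\gp(b_jx^j)\ge jv>0$ for $j\ge1$). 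Hence whenever $\ord_\gp(x)>0$ we get $\ord_\gp(\f(x))=e\cdot\ord_\gp(x)$, and in particular $\f(x)$ is again $\gp$-integral with strictly larger valuation (as $e\ge1$). Conversely, if $\ord_\gp(x)\le0$, one checks that $\ord_\gp(\f(x))\le 0$ as well: if $v<0$ the highest-degree terms dominate both numerator and denominator, and if $v=0$ one sees $\f(x)$ is $\gp$-integral, but in neither case can $\ord_\gp(\f(x))$ become positive — reduction mod $\gp$ sends $0$ to $0$ and $0$ is a fixed point, so a positive valuation can only arise from a point that already reduces to $0$.

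With this local dichotomy in hand, the two implications follow by an easy induction on $n$. For~\eqref{eqn:ordgAsmall}: by definition of $r_\gp$, we have $\ord_\gp\gA_n=0$, i.e. $\ord_\gp(\f^n(\a))\le 0$, for all $n<r_\gp$; in particular if $k\le r_\gp$ then $k-1<r_\gp$, so $\ord_\gp\gA_{k-1}=0$. For~\eqref{eqn:ordgAbig}: if $k>r_\gp$ then $k-1\ge r_\gp$, so by the definition of the rank of apparition and the fact established above that positive $\gp$-valuation propagates forward under $\f$ (once $\ord_\gp(\f^{r_\gp}(\a))>0$, every later iterate also has positive valuation), we have $\ord_\gp\gA_{k-1}=\ord_\gp(\f^{k-1}(\a))>0$. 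Applying the computation of the previous paragraph with $x=\f^{k-1}(\a)$ yields $\ord_\gp\gA_k=\ord_\gp(\f^k(\a))=e\,\ord_\gp(\f^{k-1}(\a))=e\,\ord_\gp\gA_{k-1}$, as claimed. One should also note for completeness that $e<d$ (from~\eqref{0lteltd}) is not actually needed for this lemma — only $e\ge1$ matters here — though the strict inequality $e<d$ will be essential later.

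The main (and only real) obstacle is bookkeeping: one must be careful that the dominant-term arguments for the numerator and denominator of~\eqref{eqn:fza1zb0} are valid under the sole hypothesis $\gp\notin S$, since a priori other coefficients $a_i,b_j$ might themselves be highly divisible by $\gp$ — but this only helps, as it makes the non-leading terms even smaller $\gp$-adically, so the inequalities $\ord_\gp(a_ix^i)>ev$ and $\ord_\gp(b_jx^j)>0$ still hold. The case $\ord_\gp(x)=0$ (where $x$ is a $\gp$-adic unit) requires the separate remark that $\f(x)$ is $\gp$-integral but need not be a unit; this is harmless because we only ever need that its valuation is $\le 0$, which follows since $\f\bmod\gp$ is a well-defined morphism (good reduction is implied by $\gp\notin S$ is not quite automatic, but the relevant fact — that $\f(x)\bmod\gp$ is finite, hence $\ord_\gp(\f(x))\ge 0$ forces $=0$ unless $x\equiv 0$ — can be read off directly from~\eqref{eqn:fza1zb0} since $b_0$ is a unit). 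Everything else is a one-line induction.
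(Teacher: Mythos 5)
Your proof of the lemma itself is correct and takes the same route as the paper: \eqref{eqn:ordgAsmall} is immediate from the definition of~$r_\gp$, and \eqref{eqn:ordgAbig} follows by induction once one knows that for $\gp\notin S$ and $\ord_\gp(x)>0$ the lowest-order terms $a_ex^e$ and $b_0$ strictly dominate the numerator and denominator of~\eqref{eqn:fza1zb0} $\gp$-adically, so that $\ord_\gp\f(x)=e\ord_\gp(x)$; this is exactly the paper's ultrametric computation with $\b=\f^{k-1}(\a)$, and your bookkeeping of the coefficients ($\ord_\gp a_i,\ord_\gp b_j\ge0$ only helps) is right.

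However, the ``converse'' half of your claimed local dichotomy --- that $\ord_\gp(x)\le0$ forces $\ord_\gp(\f(x))\le0$ --- is false, and the reasons you give for it do not work. For $v<0$ the top-degree terms need not dominate: the hypothesis $\gp\notin S$ says nothing about $a_d$ or $b_d$ modulo $\gp$, and one of them may even be zero. For $v=0$ the assertion that ``a positive valuation can only arise from a point that already reduces to $0$'' is wrong: $x$ may reduce to a nonzero root of the numerator. Concretely, for $\f(z)=z-z^2$ one has $e=1$, $a_1=b_0=1$, so $S=\emptyset$, yet any $x\equiv1\pmodintext{\gp}$ has $\ord_\gp(x)=0$ while $\ord_\gp\f(x)>0$. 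Indeed, if your dichotomy were true, every $\gp\notin S$ would have $r_\gp\in\{0,\infty\}$, and the primitive-divisor problem for $(\gA_n)$ would trivialize; the phenomenon your claim excludes is precisely how new primes enter the sequence. Fortunately the claim is never used: your arguments for \eqref{eqn:ordgAsmall} and \eqref{eqn:ordgAbig} rely only on the definition of $r_\gp$ and on the forward propagation of positive valuation, so the lemma is proved once you delete that sentence (and fix the minor slip that the valuation is ``strictly larger'' under iteration --- when $e=1$ it is merely equal, and only positivity is needed).
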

\begin{proof}
We note that~\eqref{eqn:ordgAsmall} is true by the definition
of~$r_\gp$, so we only need to prove~\eqref{eqn:ordgAbig},
which we do by induction on~$k$. Since~$\gA_{r_\gp}>0$ by definition,
the inductive hypothesis implies that
\begin{equation}
  \label{eqn:gAn1mpma1pb0}
   \ord_\gp\gA_i>0\qquad\text{for all $r_\gp\le i<k$.}
\end{equation}
In particular, \text{$\ord_\gp\gA_{k-1}>0$}.
\par
For notational convenience, we let~$\b=\f^{k-1}(\a)$.  The fact
that~$\gA_{k-1}$ has positive valuation implies that
\[
  \ord_\gp\b  =\ord_\gp\gA_{k-1} > 0.
\]
Further, the assumption that~$\gp\notin S$ means that
\[
  \ord_\gp a_e = \ord_\gp b_0 = 0,
\]
so when we evaluate the numerator and denominator
of~$\f(\b)$, the lowest degree terms have the strictly
smallest~$\gp$-adic valuation. Hence the ultrametric triangle
inequality gives
\begin{align*}
  \ord_\gp(\gA_{k})
  &=\ord_\gp\bigl(\f(\b)\bigr)\\
  &= \ord_\gp(a_e\b^e+a_{e+1}\b^{e+1}+\cdots+a_d\b^d) \\*
  &\qquad{}        - \ord_\gp(b_0+b_1\b+\cdots+b_d\b^d) \\
  &=e\ord_\gp(\b)
    \qquad\text{since $\ord_\gp(a_eb_0)=0$ and $\ord_\gp(\b)>0$.}\\*
  &=e\ord_\gp(\gA_{k-1}).
\end{align*}
This completes the proof of Lemma~\ref{lemma:pdivgrowth}.
\end{proof}

Next we recall the definition and basic properties of
the canonical height associated to~$\f$.

\begin{lemma}
\label{lemma:canoicalht}
The \emph{canonical height associated to~$\f$} is
the function \text{$\hhat_\f:\PP^1(\Kbar)\to\PP^1(\Kbar)$} defined by
the limit
\[
  \hhat_\f(\b) = \lim_{n\to\infty} \frac{1}{d^n}h\bigl(\f^n(\b)\bigr).
\]
It satisfies, and is characterized by, the two following properties\textup:
\begin{align}
  \hhat_\f(\b) &= h(\b)+O(1)&&\text{for all $\b\in\PP^1(\Kbar)$.}
  \label{eqn:ht1}\\
  \hhat_\f\bigl(\f(\b)\bigr) &= d\hhat_\f(\b)
          &&\text{for all $\b\in\PP^1(\Kbar)$.}
  \label{eqn:ht2}
\end{align}
The~$O(1)$ constant in~\eqref{eqn:ht1} depends on~$\f$, but is
independent of~$\b$.
\par
The values~$\hhat_\f(\b)$ are nonnegative, and
\[
  \hhat_\f(\b)>0 \quad\Longleftrightarrow\quad
  \text{$\b$ has infinite $\f$-orbit}.
\]
\end{lemma}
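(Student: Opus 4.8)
The plan is to establish the lemma by the standard ``telescoping'' construction of the canonical height (cf.~\cite[\S3.4]{silverman:ads}), whose only non-formal ingredient is Northcott's finiteness theorem for points of bounded height and bounded degree. Everything rests on the functoriality estimate for the Weil height on~$\PP^1$: because~$\f:\PP^1\to\PP^1$ is a morphism of degree~$d$, we have $\f^*\Ocal(1)\cong\Ocal(d)$, so there is a constant $C=C(\f)$, independent of the point, with
\[
  \bigl| h(\f(\b)) - d\,h(\b) \bigr| \le C
  \qquad\text{for all $\b\in\PP^1(\Kbar)$.}
\]
Concretely, write $\f=[F_0,F_1]$ with $F_0,F_1\in R[X,Y]$ homogeneous of degree~$d$ and no common factor, estimate $v$-adic valuations place by place, and use that $\Resultant(F_0,F_1)\ne0$.

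First I would prove that the limit defining~$\hhat_\f(\b)$ exists. Replacing~$\b$ by~$\f^n(\b)$ in the displayed estimate and dividing by~$d^{n+1}$ gives
\[
  \Bigl| \frac{h(\f^{n+1}(\b))}{d^{n+1}} - \frac{h(\f^n(\b))}{d^n} \Bigr| \le \frac{C}{d^{n+1}},
\]
so $\bigl(d^{-n}h(\f^n(\b))\bigr)_{n\ge0}$ is a Cauchy sequence, its consecutive differences being dominated by a convergent geometric series; call the limit~$\hhat_\f(\b)$. Summing the last bound over all $n\ge0$ telescopes to $|\hhat_\f(\b) - h(\b)| \le C/(d-1)$, which is~\eqref{eqn:ht1} with an explicit, $\b$-independent $O(1)$. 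Property~\eqref{eqn:ht2} follows by reindexing: $\hhat_\f(\f(\b)) = \lim_n d^{-n}h(\f^{n+1}(\b)) = d\lim_n d^{-(n+1)}h(\f^{n+1}(\b)) = d\,\hhat_\f(\b)$. For the characterization, if~$g$ satisfies both properties then $u := g-\hhat_\f$ is bounded and satisfies $u\circ\f = d\,u$; hence $|u(\b)| = d^{-n}|u(\f^n(\b))| \le d^{-n}\sup|u| \to 0$, forcing $g=\hhat_\f$. Nonnegativity is immediate because $h\ge0$ on~$\PP^1$, so each term in the limit is~$\ge0$.

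It remains to prove $\hhat_\f(\b)>0 \iff \b$ has infinite $\f$-orbit, and this is where Northcott enters. If~$\b$ is preperiodic then its orbit is a finite set, so $h(\f^n(\b))$ is bounded and $\hhat_\f(\b)=\lim d^{-n}h(\f^n(\b))=0$. Conversely, suppose $\hhat_\f(\b)=0$. Pick a number field~$L\supseteq K$ with $\b\in\PP^1(L)$; then $\f^n(\b)\in\PP^1(L)$ for every~$n$, and combining~\eqref{eqn:ht1} and~\eqref{eqn:ht2},
\[
  h(\f^n(\b)) \le \hhat_\f(\f^n(\b)) + \frac{C}{d-1} = d^n\hhat_\f(\b) + \frac{C}{d-1} = \frac{C}{d-1}
\]
for all~$n\ge0$. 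Thus the orbit of~$\b$ is contained in the set $\{Q\in\PP^1(L) : h(Q)\le C/(d-1)\}$, which is finite by Northcott's theorem, so the orbit is finite and~$\b$ is preperiodic. I expect the only real work to be the careful, place-by-place bookkeeping behind the height functoriality estimate; the remainder is formal telescoping together with the citation to Northcott.
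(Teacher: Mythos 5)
Your proof is correct, and it is essentially the same argument the paper relies on: the paper gives no proof of its own, simply citing Call--Silverman and \cite[\S3.4]{silverman:ads}, and your telescoping construction via the estimate $h(\f(\b))=d\,h(\b)+O(1)$ together with Northcott's theorem for the positivity criterion is precisely the standard proof found in those references.
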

\begin{proof}
See~\cite{callsilv:htonvariety}, \cite[\S B.4]{MR1745599},
or~\cite[\S3.4]{silverman:ads}.
\end{proof}

\begin{definition}
Let~$S$ be a finite set of places of~$K$, including all archimedean places
and let~$\gA$ be an integral ideal. The \emph{prime-to-$S$ norm of~$\gA$} is
the quantity
\[
  \Norm_S\gA = \prod_{\gp\notin S} \gp^{\ord_\gp\gA}.
\]
As the name suggests,~$\Norm_S\gA$ is the part of~$\Norm_{K/\QQ}\gA$ that
is relatively prime to all of the primes in~$S$.
\end{definition}

We next apply the main result of~\cite{MR1240603} to show that
$\log(\Norm_S\gA_n)$ grows like a constant multiple of~$d^n$. We observe
that the proof of~\cite[Theorem~E]{MR1240603} requires some sort of
nontrivial theorem on Diophantine approximation such as Roth's
theorem, so despite its simple statement, the following lemma conceals
the deepest part of the proof of Theorem~\ref{thm:dynBZ}. For our
purposes, we require the general number field version proven
in~\cite{MR1240603}, but see~\cite[\S3.8]{silverman:ads} for a more
leisurely exposition of the same result over~$\QQ$
with~$S=\{\infty\}$.

\begin{lemma}
\label{lemma:htvslognorm}
\textup{(a)}
There is a constant~$C=C(\f,\a)$ so that
\begin{equation}
  \label{eqn:1ednlnAn1}
  \frac{1}{[K:\QQ]} \log\Norm_{K/\QQ}\gA_n
  \le d^n\hhat_\f(\a) + C
  \qquad\text{for all $n\ge0$.}
\end{equation}
\par\noindent\textup{(b)}
Let~$S$ be a finite set of places, including all archimedean places,
and let~$\e>0$. There is an~$n_0=n_0(\e,S,\f,\a)$ so that 
\begin{equation}
  \label{eqn:1ednlnAn2}
  \frac{1}{[K:\QQ]} \log\Norm_S\gA_n
  \ge (1-\e)d^n\hhat_\f(\a)
  \qquad\text{for all $n\ge n_0$.}
\end{equation}
\end{lemma}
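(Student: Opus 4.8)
The plan is to obtain both inequalities from the defining properties of $\hhat_\f$ in Lemma~\ref{lemma:canoicalht} together with the decomposition of the Weil height of $\f^n(\a)$ into local contributions; the only serious ingredient, needed for the lower bound in~(b), is the Diophantine approximation input of~\cite{MR1240603}.

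For part~(a), view $\f^n(\a)$ as a point of $\PP^1(K)$ and use $h\bigl(\f^n(\a)\bigr)=h\bigl(1/\f^n(\a)\bigr)$. Since $\gA_n$ is the denominator ideal of $1/\f^n(\a)$, the non-archimedean part of the height sum for $1/\f^n(\a)$ is exactly $\frac{1}{[K:\QQ]}\log\Norm_{K/\QQ}\gA_n$, while the archimedean part is nonnegative; hence $\frac{1}{[K:\QQ]}\log\Norm_{K/\QQ}\gA_n\le h\bigl(\f^n(\a)\bigr)$, which is trivial when $\f^n(\a)=\infty$, as then $\gA_n=(1)$. Now apply~\eqref{eqn:ht1} with $\b=\f^n(\a)$ and then~\eqref{eqn:ht2} repeatedly: $h\bigl(\f^n(\a)\bigr)\le\hhat_\f\bigl(\f^n(\a)\bigr)+C=d^n\hhat_\f(\a)+C$, where $C$ depends only on $\f$. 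This is~\eqref{eqn:1ednlnAn1}.

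For part~(b), let $\l_v$ denote the local height for the point $0\in\PP^1$ at a place $v$, normalized so that $\sum_v\l_v(\b)=h(\b)$ for all $\b$ (for instance $\l_v(\b)$ proportional to $\log^+\!|1/\b|_v$). Splitting this sum according to whether $v\in S$ or not, and noting that the part coming from the finite places $\gp\notin S$ is precisely $\frac{1}{[K:\QQ]}\log\Norm_S\gA_n$, one gets the identity
\[
  \frac{1}{[K:\QQ]}\log\Norm_S\gA_n \;=\; h\bigl(\f^n(\a)\bigr) \;-\; \sum_{v\in S}\l_v\bigl(\f^n(\a)\bigr).
\]
By~\eqref{eqn:ht1} and~\eqref{eqn:ht2} again, $h\bigl(\f^n(\a)\bigr)\ge d^n\hhat_\f(\a)-C$, so it suffices to bound the finite sum $\sum_{v\in S}\l_v\bigl(\f^n(\a)\bigr)$ by $o(d^n)$. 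This is the dynamical analogue of Siegel's theorem: because $\f$ is not of polynomial type at the fixed point $0$, the point $0$ is not an exceptional point of $\f$, so \cite[Theorem~E]{MR1240603}, applied to the wandering point $\a$ and the point $0$, gives $\l_v\bigl(\f^n(\a)\bigr)=o\bigl(h(\f^n(\a))\bigr)=o(d^n)$ for each of the finitely many places $v\in S$. Summing and combining the estimates yields
\[
  \frac{1}{[K:\QQ]}\log\Norm_S\gA_n \;\ge\; d^n\hhat_\f(\a)-o(d^n).
\]
Since $\a$ is wandering, $\hhat_\f(\a)>0$ by Lemma~\ref{lemma:canoicalht}, so for any $\e>0$ the error term is eventually at most $\e\,d^n\hhat_\f(\a)$, giving~\eqref{eqn:1ednlnAn2} for all $n\ge n_0(\e,S,\f,\a)$. (The at most one value of $n$ with $\f^n(\a)=\infty$, possible since $\a$ is wandering, causes no trouble.)

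The step I expect to be the main obstacle is the appeal to \cite[Theorem~E]{MR1240603}: the bound $\l_v\bigl(\f^n(\a)\bigr)=o(d^n)$ rests on a Roth/Ridout-type theorem in Diophantine approximation and is the genuinely deep part of the whole proof. Everything else is height bookkeeping; the two points requiring a little care are fixing the normalization of the local heights $\l_v$ so that the displayed identity is exact, and verifying that the hypothesis ``$\f$ not of polynomial type at $0$'' is precisely what rules out the exceptional case in which the cited theorem fails --- as it must fail there, by the example $\f(z)=z^d/F(z)$, for which $\l_\gp\bigl(\f^n(\a)\bigr)$ grows like $d^n$.
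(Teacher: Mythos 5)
Your proposal is correct and takes essentially the same route as the paper: part (a) is the same decomposition of the Weil height of $\f^n(\a)$ into the norm of the ideal $\gA_n$ plus nonnegative archimedean terms, combined with $h=\hhat_\f+O(1)$ and $\hhat_\f\circ\f=d\hhat_\f$, and part (b) is the same splitting of $h\bigl(\f^n(\a)\bigr)$ into $\frac{1}{[K:\QQ]}\log\Norm_S\gA_n$ plus the finitely many $S$-local contributions, with \cite[Theorem~E]{MR1240603} (legitimately applicable because $\f$ is not of polynomial type at the fixed point $0$) supplying the $o(d^n)$ bound on those contributions. The only cosmetic difference is that the paper passes to the limit to identify $\lim_n \log\Norm_S\gA_n/([K:\QQ]d^n)=\hhat_\f(\a)$, whereas you absorb the error terms directly into $\e\,d^n\hhat_\f(\a)$ using $\hhat_\f(\a)>0$; the content is identical.
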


\begin{remark}
We observe that the elementary upper bound~\eqref{eqn:1ednlnAn1} is
true for any rational map, while the deeper lower
bound~\eqref{eqn:1ednlnAn2} requires the assumption that~$\f$ is not
of polynomial type at~$0$.
\end{remark}

\begin{proof}
(a) 
In general, if~$\b\in K^*$ and if we write the ideal~$(\b)$
as a quotient of relatively prime integral ideals~$(\b)=\gA\gB^{-1}$,
then the (normalized logarithmic) height of~$\b$ is given by
the formula
\begin{equation}
  \label{eqn:hbnorm}
  h(\b) = \frac{1}{[K:\QQ]} \Bigl(\log\Norm_{K/\QQ}\gB
             + \sum_{v\in M_K^\infty} [K_v:\QQ_v]\log\max\{1,|\b|_v\} \Bigr).
\end{equation}
See for example~\cite[\S3.1]{LangDG}.  We apply this
with~$\b=\f^n(\a)$, so~$\gB=\gA_n$, and we use the fact that all of
the terms in the sum are non-negative to deduce that
\begin{equation}
  \label{eqn:hfn1}
  h\bigl(\f^n(\a)\bigr) \ge \frac{1}{[K:\QQ]}\log\Norm_{K/\QQ}\gA_n.
\end{equation}
Finally, we use Lemma~\ref{lemma:canoicalht} to compute
\begin{equation}
  \label{eqn:hfn2}
  h\bigl(\f^n(\a)\bigr) = \hhat_\f\bigl(\f^n(\a)\bigr) + O(1)
  = d^n\hhat_\f(\a)+O(1).
\end{equation}
Combining~\eqref{eqn:hfn1} and~\eqref{eqn:hfn2} completes
the proof of~(a).
\par\noindent(b)
Our assumption that~$\f$ is not of polynomial type at~$0$ allows us to
apply~\cite[Theorem~E]{MR1240603}. This theorem implies that
for each~$v\in S$ we have
\begin{equation}
  \label{eqn:limdfna}
  \lim_{n\to\infty} \frac{\d_v\bigl(\f^n(\a),0\bigr)}{d^n}
  = 0,
\end{equation}
where~$\d_v$ is a logarithmic $v$-adic distance function
on~$\PP^1(K_v)$. Since we are measuring the distance to~$0$, we may
take~$\d_v$ to be the function
\[
  \d_v(\b,0) = 1 + \log\left(\frac{\max\{|\b|_v,1\}}{|\b|_v}\right)
           = 1 + \log \max\{1,|\b|_v^{-1}\}.
\]
(See~\cite[\S3]{MR1240603}.) Substituting this into~\eqref{eqn:limdfna},
we obtain the equivalent statement
\begin{equation}
  \label{eqn:limdfna1}
  \lim_{n\to\infty} \frac{\log \max\{1,|\f^n(\a)|_v^{-1}\}}{d^n}
  = 0.
\end{equation}
\par
We now rewrite~\eqref{eqn:hbnorm}, moving the part of the norm coming
from primes in~$S$ into the sum. Using our notation for the
prime-to-$S$ norm,  formula~\eqref{eqn:hbnorm} becomes
\begin{equation}
  \label{eqn:hbnorm1}
  h(\b) = \frac{1}{[K:\QQ]} \Bigl(\log\Norm_S\gB
             + \sum_{v\in S} [K_v:\QQ_v]\log\max\{1,|\b|_v\} \Bigr).
\end{equation}
We apply~\eqref{eqn:hbnorm1} with~$\b=\f^n(\a)^{-1}$, so~$\gB=\gA_n$.
Since~$h(\b)=h(\b^{-1})$ for any nonzero~$\b$, this gives
\begin{multline}
  \label{eqn:hanorm}
  h\bigl(\f^n(\a)\bigr) = \frac{1}{[K:\QQ]} \log\Norm_S\gA_n \\*
     + \sum_{v\in S} \frac{[K_v:\QQ_v]}{[K:\QQ]}
       \log\max\{1,|\f^n(\a)|^{-1}_v\}.
\end{multline}
We now divide both sides of~\eqref{eqn:hanorm} by~$d^n$ and let~$n\to\infty$.
The limit formula~\eqref{eqn:limdfna1} tells us that the sum
over the places in~$S$ goes to~$0$. On the other hand, the
left-hand side of~\eqref{eqn:hanorm} is exactly the limit that defines the
canonical height. Hence we obtain
\[
  \hhat_\f(\a) 
  = \lim_{n\to\infty} \frac{1}{[K:\QQ]} \frac{\log\Norm_{K/\QQ}\gA_n}{d^n}.
\]
This limit implies the lower bound~\eqref{eqn:1ednlnAn2} that we are
trying to prove, which completes the proof of
Lemma~\ref{lemma:htvslognorm}. (It also implies an upper bound, but a
weaker upper bound than we obtained by the elementary argument in~(a).)
\end{proof}

We have assembled all of the tools needed to complete the proof of
Theorem~\ref{thm:dynBZ}. Our goal is to show that~$\gA_n$ has a
primitive prime divisor for all sufficiently large~$n$.  
In order to do this, we define an ideal
\begin{equation}
  \label{eqn:prod1}
  \gE_{n,S} := 
  \prod_{\substack{\text{primes $\gp\notin S$ that divide}\\ 
      \text{one of $\gA_0,\gA_1,\dots,\gA_{n-1}$}\\}}
                   \gp^{\ord_\gp\gA_n}.
\end{equation}
Similarly, we let~$\gA_{n,S}$ be the prime-to-$S$ part of~$\gA_n$, thus
\[
  \gA_{n,S} = \prod_{\gp\notin S} \gp^{\ord_\gp\gA_n}.
\]
We will prove that for all sufficiently large~$n$, the
ideal~$\gA_{n,S}$ is strictly larger than the ideal~$\gE_{n,S}$.  This
will imply the desired result, since it will show that~$\gA_n$ has a
primitive prime divisor, and indeed a primitive prime divisor not
lying in~$S$.
\par
Suppose that~$\gp\notin S$ divides~$\gA_i$ for some~$i<n$.
Then~$r_\gp\le i<n$, so Lemma~\ref{lemma:pdivgrowth} 
tells us that
\[
  \ord_\gp\gA_{n-1}>0 \qquad\text{and}\qquad
  \ord_\gp\gA_n = e\ord_\gp\gA_{n-1}.
\]
Thus in the product~\eqref{eqn:prod1} defining~$\gE_{n,S}$,
it suffices to multiply over the primes dividing~$\gA_{n-1}$, and we
find that
\[
  \gE_{n,S} = \prod_{\gp\notin S,\;\gp\mid\gA_{n-1}} \gp^{\ord_\gp\gA_n}
  = \prod_{\gp\notin S,\;\gp\mid\gA_{n-1}} \gp^{e\ord_\gp\gA_{n-1}}
  = \gA_{n-1,S}^e.
\]
The upper bound in Lemma~\ref{lemma:htvslognorm}(a), applied
to~$\gA_{n-1}$, gives the estimate
\begin{align}
  \label{eqn:lognorm1}
  \log\Norm_{K/\QQ}\gE_{n,S}
  & = e\log\log\Norm_{K/\QQ}\gA_{n-1,S}\notag\\
  &\le e\log \Norm_{K/\QQ}\gA_{n-1} \notag\\
  &\le [K:\QQ]ed^{n-1}\hhat_\f(\a) - O(1),
\end{align}
where the~$O(1)$ depends only on~$\f$,~$\a$, and~$K$.
\par
To obtain the complementary lower bound, we apply
Lemma~\ref{lemma:htvslognorm}(b) to~$\gA_n$ with~$\e=\frac{1}{2d}$. This
gives
\begin{equation}
  \label{eqn:lognorm2}
  \log\Norm_{K/\QQ}\gA_{n,S}
  = \log\Norm_S\gA_n
  \ge [K:\QQ](1-\tfrac{1}{2d})d^n\hhat_\f(\a),
\end{equation}
valid for~$n\ge n_0(S,\f,\a)$.
\par
Finally, we combine~\eqref{eqn:lognorm1} and~\eqref{eqn:lognorm2} 
to obtain
\begin{align*}
  \log\frac{\Norm_{K/\QQ}\gA_{n,S}}{\Norm_{K/\QQ}\gE_{n,S}}
  &\ge \left(1-\frac{1}{2d}\right)d^n\hhat_\f(\a)
         - [K:\QQ]ed^{n-1}\hhat_\f(\a) + O(1) \\
  &\ge \frac12[K:\QQ]d^{n-1}\hhat_\f(\a) + O(1),
  \quad\text{since $e<d$ from~\eqref{0lteltd}.}
\end{align*}
The right-hand side is positive for all sufficiently large~$n$,
which completes the proof that~$\gA_n$ has a primitive divisor
for all sufficiently large~$n$, and
hence that the Zsigmondy set~$\Zcal\bigl((\gA_n)_{n\ge1}\bigr)$ is
finite.
\end{proof}

\begin{example}
\label{example:emptyZset}
It is easy to construct specific examples, and even families of examples,
for which one can find the full dynamical Zsgimondy set by an
elementary calculation. We illustrate with one such example,
see~\cite{flatters07,rice07} for others.
\par
Let~$\f(z)=z^2+z$, let~$\g=0$, and let~$\a\in\QQ$ with~$\a>0$.
Then for all~$n\ge1$ we have~$\gcd(A_{n-1},B_{n-1})=1$ and
\[
  \frac{A_{n}}{B_{n}} = \frac{A_{n-1}^2+A_{n-1}B_{n-1}}{B_{n-1}^2},
\]
so~$A_{n}=A_{n-1}^2+A_{n-1}B_{n-1}$ and~$B_{n}=B_{n-1}^2$. In particular, 
\[
  A_0\mid A_1\mid A_2\mid\cdots
\]
so~$A_n$ has a primitive prime divisor if and only if there is a
prime~$p\mid A_n$ with~$p\notdivide A_{n-1}$.  But
from~$A_{n}=A_{n-1}(A_{n-1}+B_{n-1})$, this means that~$A_n$ has a
primitive prime divisor if and only if~$A_{n-1}+B_{n-1}>1$. This is
true for every~$n\ge1$, so it follows
that~$\Zcal\bigl((A_n)_{n\ge1}\bigr)=\emptyset$.
%% *** This is an easy example. How about $z^2-z$, that's
%% more interesting? In particular, can't allow $\a=1$ or $\a=2$.
\end{example}

%%%%%%%%%%%%%%%%%%%%%%%%%%%%%%%%%%%%%%%%%%%%%%%%%%%%%%%%%%%%%%%%%%%%%%
\section{Questions and Speculations}
\label{section:speculations}
It is interesting to ask to what extent one can relax or modify the
hypotheses in Theorem~\ref{thm:dynBZ}. We discuss this question in a
series of remarks.

\begin{remark}
Presumably Theorem~\ref{thm:dynBZ} remains true if we only assume
that~$\g$ is preperiodic, i.e., has a finite $\f$-orbit, rather than
assuming that~$\g$ is periodic. However, some care is needed in order
to generalize the argument, so we have not pursued it here. We observe
that Rice's main result~\cite[Theorem~1.1]{rice07}
for monic~$\f(z)\in\ZZ[z]$ and~$\a\in\ZZ$ permits~$\g$ to be preperiodic,
albeit in a situation where it is relatively easy to classify all
situations for which there exist nonperiodic preperiodic points.
\end{remark}

\begin{remark}
\label{remark:localvsglobal}
If~$\gA_n$ is a classical divisibility sequence associated to a
(possibly twisted) multiplicative group or to an elliptic curve, then
the higher order~$\gp$ divisibility can be can be described quite
precisely via properties of the formal group of the group
variety. Roughly speaking, if~$\gA_r$ is the first term divisible
by~$\gp$, then
\begin{equation}
  \label{eqn:classicalordp}
  \ord_\gp(\gA_{kr}) = \ord_\gp(\gA_r) + \ord_\gp(k)
  \quad\text{for all $k\ge1$,}
\end{equation}
and no other terms are divisible by~$\gp$. The proof
of~\eqref{eqn:classicalordp} is essentially $\gp$-adic in nature, and
indeed it holds for classical divisibility sequences defined over the
$\gp$-adic completion~$K_\gp$ of~$K$. We note that the
estimate~\eqref{eqn:classicalordp} forms an essential, albeit
reasonably elementary, component of the proof that classical Zsigmondy
sets are finite.
\par
In the proof of our main theorem, Lemma~\ref{lemma:pdivgrowth}
plays the role of~\eqref{eqn:classicalordp}, but note
that Lemma~\ref{lemma:pdivgrowth}, which implies that
\[
  \ord_\gp\gA_k = e^{k-r}\ord_\gp\gA_r,
\]
is only valid for primes outside a certain bad set~$S$.  It is natural
to ask if the dynamical analog of~\eqref{eqn:classicalordp} is true
over~$K_\gp$, since that might lead to a better understanding of the
underlying dynamics. The answer is probably not.  Rob Benedetto
(private communication) has sketched an argument using ideas
from~\cite{MR1941304,MR2285248} which suggests that there are
dynamical examples over~$\QQ_p$ such that~$\ord_p\gA_n$ grows grows
extremely rapidly, for exampe faster than~$O(d^n)$, or even faster
than~$O(2^{d^n})$.
\par
This is in marked contrast to the situation over a number field,
where the elementary height estimate
\begin{multline*}
  \ord_\gp\f^n(\a)\log \Norm\gp
  \le\log \Norm\gA_n
  \le [K:\QQ]h\bigl(\f^n(\a)\bigr) \\
  = [K:\QQ]\hhat_\f\bigl(\f^n(\a)\bigr)+O(1) 
   = d^n[K:\QQ]\hhat_\f(\a)+O(1)
\end{multline*}
shows that $\ord_\gp\f^n(\a)$ cannot grow faster than~$O(d^n)$.
\end{remark}

\begin{remark}
\label{remark:gwandering}
If we change the assumptions of Theorem~\ref{thm:dynBZ} to make~$\g$
a wandering point, then Zsigmondy-type theorems appear to be
more difficult to prove. Note that when stripped to their essentials, 
proofs of Zsigmondy-type theorems have two main ingredients:
\begin{enumerate}
\item
Prove that the sequence~$\gA_n$ grows very rapidly.
\item
Prove that once a prime~$\gp$ divides some term in the sequence, it
cannot divide later terms to an extremely high power.
\end{enumerate}
Condition~1 is a global condition, and it is true for both preperiodic
and wandering~$\g$. This may be proved
using~\cite[Theorem~E]{MR1240603} as we did in the proof of
Lemma~\ref{lemma:htvslognorm}.  Indeed, the proof of
Lemma~\ref{lemma:htvslognorm} only uses the assumption that~$\f$
is not of polynomial type at~$\g$, it does not require that~$\g$
be a periodic point.
\par
Thus the difficulty in proving a Zsigmondy-type theorem for
wandering~$\g$ is Condition~(2). As noted in
Remark~\ref{remark:localvsglobal}, classical multiplicative or elliptic
divisibility sequences satisfy
\[
  \ord_\gp(\gA_{kr}) = \ord_\gp(\gA_r) + \ord_\gp(k),
\]
where~$r$ is the rank of apparition at~$\gp$.  However, in the
dynamical setting there is no analogous uniform result, and indeed it
is easy to construct examples in which $\ord_\gp(\gA_{kr})$ is
arbitrarily much larger than~$\ord_\gp(\gA_r)$. For example, let
\[
  \g=0,\quad
  \a=p,\quad\text{and}\quad
  \f(z)=z^2-pz+p^e.
\]
Then
\[
  \ord_p(\gA_0)=\ord_p(\a)=1
  \quad\text{and}\quad
  \ord_p(\gA_1)=\ord_p(p^e)=e.
\]
\end{remark}

\begin{remark}
\label{remark:a=g}
Our main theorem is really about primes~$\gp$ such that the reduction
modulo~$\gp$ of a wandering point~$\a$ coincides with a given periodic
point~$\g$. This is a natural question, but it is possibly not the
most natural generalization of the classical multipicative and
elliptic divisibility sequences. In the classical case, one studies
the order of~$\a$ modulo~$\gp$ for varying primes ideals~$\gp$, which
means the order of~$\a\bmod\gp$ as a torsion point in the underlying
group. The dynamical analog of the set of torsion points
is the set of preperiodic points, so
a natural dynamical analog of the classical Zsigmnody theorems
would be  to look at primitive prime divisors in the sequence
\[
  \f^n(\a) - \a,
  \qquad\text{or, more generally,}\quad
  \f^{m+n}(\a)-\f^m(\a).
\]
We formulate two primitive divisor conjectures, analogous to
Theorem~\ref{thm:dynBZ}, corresponding to these situations.
\end{remark}

\begin{conjecture}
\label{conj:weakconj}
Let~$K$ be a number field, let~$\f(z)\in
K(z)$ be a rational function of degree~$d\ge2$,
and let~$\a\in K$ be a $\f$-wandering point.
For each~$n\ge1$, write the ideal
\[
  \bigl(\f^n(\a)-\a\bigr) = \gA_n\gB_n^{-1}
\]
as a quotient of relatively prime integral ideals.  Then the dynamical
Zsigmondy set $\Zcal\bigl((\gA_n)_{n\ge1}\bigr)$ is finite.
\end{conjecture}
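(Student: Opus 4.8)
The natural plan is to follow the proof of Theorem~\ref{thm:dynBZ}: combine (1)~a lower bound~$\log\Norm_S\gA_n\ge(1-\e)[K:\QQ]d^n\hhat_\f(\a)$ with (2)~an upper bound showing that, once a prime~$\gp$ enters the sequence, its total contribution to the later terms~$\gA_n$ cannot exhaust~$\gA_n$.

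Step~(1) goes through with no essential change. The wandering point~$\a$ is not periodic, so~$\f$ is (vacuously) not of polynomial type at~$\a$, and~\cite[Theorem~E]{MR1240603} applies with target~$\a$, giving~$\d_v\bigl(\f^n(\a),\a\bigr)/d^n\to0$ for every~$v$ in any fixed finite set~$S$ of places. Running the argument of Lemma~\ref{lemma:htvslognorm}(b) with this input, and using~$h\bigl(\f^n(\a)-\a\bigr)=h\bigl(\f^n(\a)\bigr)+O(1)=d^n\hhat_\f(\a)+O(1)$, yields the stated lower bound for all~$n\ge n_0(\e,S,\f,\a)$; cf.\ Remark~\ref{remark:gwandering}, which already observes that this step needs only that~$\f$ is not of polynomial type at~$\a$.

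Step~(2) is the real difficulty, because Lemma~\ref{lemma:pdivgrowth} has no exact analogue here. Fix a prime~$\gp$ of good reduction, outside a finite bad set, dividing some~$\gA_r$; then~$\bar\a=\a\bmod\gp$ lies in a~$\bar\f$-cycle of exact period~$r_\gp$, one has~$\ord_\gp\gA_n>0$ only for~$r_\gp\mid n$, and the growth of~$\ord_\gp\gA_{kr_\gp}=\ord_\gp\bigl((\f^{r_\gp})^k(\a)-\a\bigr)$ is controlled by the multiplier~$\l_\gp$ of that cycle. If~$|\l_\gp|_\gp<1$, the orbit of~$\a$ contracts~$\gp$-adically onto a Hensel lift of the cycle and~$\ord_\gp\gA_{kr_\gp}=\ord_\gp\gA_{r_\gp}$ for all~$k\ge1$ — an even milder behaviour than the geometric growth allowed in Lemma~\ref{lemma:pdivgrowth}. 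If~$|\l_\gp|_\gp=1$, then replacing~$\f^{r_\gp}$ by the iterate~$\f^{r_\gp e_\gp}$, where~$e_\gp$ is the order of~$\l_\gp\bmod\gp$, reduces matters to a cycle whose multiplier is~$\equiv1\pmod\gp$ — \emph{parabolic reduction} — and this is the obstacle. When the multiplier is~$\equiv1$ but~$\ne1$ a lifting-the-exponent computation bounds~$\ord_\gp\gA_{jm}$ (for~$m$ the period of the chosen iterate) by~$\ord_\gp\gA_m+\ord_\gp(j)$ plus a~$\gp$-dependent constant, but when the multiplier is exactly~$1$ one faces a genuine~$\gp$-adic parabolic germ~$z\mapsto z+c_q(z-\omega)^q+\cdots$ whose iteration requires Fatou-coordinate estimates with real convergence issues; and even granting the expected bound~$\ord_\gp\gA_{jm}\ll_\f\ord_\gp\gA_m+\ord_\gp(j)$, one must still rule out that primes of large parabolic depth for iterates of index close to~$n$ absorb a positive proportion of~$\log\Norm\gA_n$. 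That no crude uniform bound is available is shown by the example~$\f(z)=z^2-pz+p^e$, $\a=p$ of Remark~\ref{remark:gwandering}, where already~$\ord_\gp\gA_{r_\gp}$ is unbounded.

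If one had such local bounds and could control this aggregation, the conjecture would follow: the contributions to the imprimitive ideal~$\gE_{n,S}$ of~\eqref{eqn:prod1} coming from the base terms~$\ord_\gp\gA_m$ divide~$\prod_{m\le n/2}\gA_m$, of~$\log$-norm~$O_{\f,\a}(d^{n/2})$ by Lemma~\ref{lemma:htvslognorm}(a), while the terms~$\ord_\gp(j)$, being supported on the~$O(\log n)$ rational primes dividing~$n$, add only~$O_K(\log n)$; both are negligible against the lower bound of Step~(1), so~$\gA_n$ has a primitive divisor for large~$n$. A tempting shortcut is the dynatomic factorisation~$\f^n(\a)-\a=\prod_{d\mid n}\Phi^*_{\f,d}(\a)$ (cf.~\cite[\S4.1]{silverman:ads}), since the elementary estimate~$\log\Norm_{K/\QQ}\gC_d\le[K:\QQ]\,h\bigl(\Phi^*_{\f,d}(\a)\bigr)=O_{\f,\a}(d^{\,d})$, where~$\gC_d$ is the numerator ideal of~$\Phi^*_{\f,d}(\a)$, makes~$\prod_{d\mid n,\,d<n}\gC_d$ of~$\log$-norm~$O_{\f,\a}(d^{\,n/2+o(n)})$; but at primes of parabolic reduction~$\Phi^*_{\f,n}(\a)$ can acquire factors that are \emph{not} primitive divisors of~$\gA_n$, so this only relocates the difficulty. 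I therefore expect the~$\gp$-adic dynamics at primes of parabolic reduction to be the crux — and this is why the statement remains conjectural.
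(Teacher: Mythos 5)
You have not proved the statement, and you say so yourself; for the record, the paper does not prove it either --- it is stated as Conjecture~\ref{conj:weakconj} precisely because the step you flag is open. Your Step~(1) is sound and matches the paper's own observation in Remark~\ref{remark:gwandering}: the growth estimate of Lemma~\ref{lemma:htvslognorm} uses only that the target point is not exceptional (not of polynomial type for any iterate), which holds vacuously for the wandering target $\a$, so the lower bound $\log\Norm_S\gA_n\ge(1-\e)[K:\QQ]d^n\hhat_\f(\a)$ is indeed available. Your diagnosis of Step~(2) as the crux also coincides with the paper's: the missing ingredient is an analogue of Lemma~\ref{lemma:pdivgrowth}, i.e.\ control of $\ord_\gp\gA_n$ once $\gp$ has appeared, and the paper's example $\f(z)=p-z+p^ez^2$, $\a=0$ (following the conjecture) shows no crude uniform bound holds.

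The genuine gap is that the local estimates you would need are asserted, not established, and even your own outline does not close it. In the attracting case the identity $\ord_\gp\gA_{kr_\gp}=\ord_\gp\gA_{r_\gp}$ should be ``for all sufficiently large $k$'' (the first few iterates can pick up extra cancellation before the orbit settles near the Hensel lift), which is harmless but symptomatic; in the indifferent case your lifting-the-exponent bound carries a ``$\gp$-dependent constant'' whose uniformity in $\gp$ is exactly what the aggregation over $\gE_{n,S}$ in~\eqref{eqn:prod1} requires and what you do not supply; and in the genuinely parabolic case ($\l_\gp=1$ in the residue field after passing to the iterate) you concede that only an ``expected bound'' is available. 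Remark~\ref{remark:localvsglobal} warns that purely $\gp$-adic control can fail spectacularly (Benedetto's examples over $\QQ_p$ with $\ord_p$ growing faster than $O(d^n)$), while the global height bound only gives $\ord_\gp\gA_n=O(d^n)$, which is of the same order as the main term from Step~(1) and hence useless by itself. So the proposal is a reasonable research plan, consistent with the paper's remarks, but it is not a proof: the key divisibility-control lemma for wandering $\g=\a$ remains unproven, which is why the statement is a conjecture.
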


Note that as discussed in Remark~\ref{remark:gwandering}, there is no
problem with the growth of~$\Norm\gA_n$.  However, there is a
potential problem of primes reappering in the sequence to very high
powers. For example, if we take~$\f(z)=p-z+p^ez^2$ and~$\a=0$, then
\[
  \f(\a) = p\quad\text{and}\quad \f^2(\a)=\f(p)=p^{e+2}.
\]

In order to state the second conjecture, we need to extend the
definition of Zsigmondy sets to doubly indexed sequences.

\begin{definition}
Let 
\[
  (\gA_{m,n})_{\substack{m\ge1\\n\ge0\\}}
\]
be a doubly indexed sequence of ideals.  We say that~$\gp$ is a
\emph{primitive prime divisor} of~$\gA_{m,n}$ if
\[
  \gp\mid\gA_{m,n}
  \quad\text{and}\quad
  \gp\nmid\gA_{i,j}
  \quad\text{for all $i,j$ with $0\le i<m$ or $1\le j<n$.}
\]
The we define the \emph{Zsigmondy set~$\Zcal(\gA_{m,n})$} to be the set
\[
%%  \Zcal(\gA_{m,n})=
  \bigl\{(m,n):\text{$n\ge1$, $m\ge0$, and
            $\gA_{m,n}$ has no primitive divisors}\bigr\}.
\]
\end{definition}

\begin{conjecture}
\label{conj:strongconj}
Let~$K$ be a number field, let~$\f(z)\in K(z)$ be a rational function
of degree~$d\ge2$, and let~$\a\in K$ be a $\f$-wandering point.  For
each~$n\ge1$ and~$m\ge0$, write the ideal
\begin{equation}
  \label{eqn:fmnafma}
  \bigl(\f^{m+n}(\a)-\f^m(\a)\bigr) = \gA_{m,n}\gB_{m,n}^{-1}
\end{equation}
as a quotient of relatively prime integral ideals.  Then the dynamical
Zsigmondy set $\Zcal\bigl(\gA_{m,n}\bigr)$ is finite.
\end{conjecture}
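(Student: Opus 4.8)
\begin{remark}
We sketch a possible approach to Conjecture~\ref{conj:strongconj} and indicate where we expect the real difficulty to lie. Since $\f^{m+n}(\a)-\f^m(\a)=\f^{n}\bigl(\f^m(\a)\bigr)-\f^m(\a)$, the sequence~\eqref{eqn:fmnafma} is, for each fixed~$m$, precisely the sequence of Conjecture~\ref{conj:weakconj} with wandering base point~$\f^m(\a)$, so Conjecture~\ref{conj:strongconj} is essentially a version of Conjecture~\ref{conj:weakconj} that is uniform as the base point runs along the orbit of~$\a$. The plan is to follow the paradigm of the proof of Theorem~\ref{thm:dynBZ} (see also Remark~\ref{remark:gwandering}) and establish \textup{(1)}~rapid growth of $\Norm_{K/\QQ}\gA_{m,n}$, and \textup{(2)}~an upper bound on the power to which a prime~$\gp$ already dividing an earlier term~$\gA_{i,j}$ can divide~$\gA_{m,n}$.

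For step~\textup{(1)}, write $\b_{m,n}=\f^{m+n}(\a)-\f^m(\a)$. The triangle inequality for the Weil height together with Lemma~\ref{lemma:canoicalht} gives
\[
  h(\b_{m,n}) \ge h\bigl(\f^{m+n}(\a)\bigr)-h\bigl(\f^m(\a)\bigr)-\log 2
   = d^{m}\bigl(d^{n}-1\bigr)\hhat_\f(\a)+O(1),
\]
which for $n\ge1$ is comparable to $d^{m+n}\hhat_\f(\a)$. To turn this into a lower bound for the norm of the \emph{numerator} $\gA_{m,n}$, we would show that at each place~$v$ the quantity $\log\max\{1,|\b_{m,n}|_v^{-1}\}$ is small compared with~$d^{m+n}$; equivalently, that the orbit of~$\a$ does not approach its own earlier iterates too rapidly~$v$-adically, or geometrically, that the $(\f\times\f)$-orbit of $\bigl(\f^{n}(\a),\a\bigr)\in\PP^1\times\PP^1$ approaches the diagonal slowly. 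Because~$\a$ is wandering, $\f^m(\a)$ is never periodic, so~$\f$ is automatically not of polynomial type at~$\f^m(\a)$, and for each fixed~$m$ the estimate~\eqref{eqn:limdfna} of~\cite[Theorem~E]{MR1240603} applies to~$\f$ with target~$\f^m(\a)$. The delicate point is uniformity: one needs the implied constant in that estimate to depend only polynomially on the height of the moving target~$\f^m(\a)$, which is $O(d^m)$, so that a bound $\log\max\{1,|\b_{m,n}|_v^{-1}\}\le\e d^{m+n}+O(d^m)$ holds effectively and uniformly in~$m$. A Roth-type argument along the lines of~\cite{MR1240603} should supply this, giving $\log\Norm_S\gA_{m,n}\ge(1-\e)d^{m+n}\hhat_\f(\a)$ whenever $m+n$ is sufficiently large.

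The hard part will be step~\textup{(2)}. For a prime~$\gp$ of good reduction for~$\f$, reduction modulo~$\gp$ commutes with~$\f$, so $\a\bmod\gp$ is preperiodic with some tail length~$t_\gp$ and eventual period~$\ell_\gp$, and then $\gp\mid\gA_{m,n}$ if and only if $m\ge t_\gp$ and $\ell_\gp\mid n$; this combinatorial description would play the role of the rank of apparition in Theorem~\ref{thm:dynBZ}. The obstruction lies entirely with the size of $\ord_\gp\gA_{m,n}$: as the examples in Remarks~\ref{remark:localvsglobal} and~\ref{remark:gwandering} show, there is no dynamical analog of the classical identity $\ord_\gp(\gA_{kr})=\ord_\gp(\gA_r)+\ord_\gp(k)$, a prime can reappear to a wildly larger power, and the clean estimate $\gE_{n,S}=\gA_{n-1,S}^{e}$ with $e<d$ that drives the proof of Theorem~\ref{thm:dynBZ} has no counterpart here. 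All that comes for free is the crude global cap
\[
  \ord_\gp\b_{m,n}\cdot\log\Norm_{K/\QQ}\gp
  \le[K:\QQ]\,h(\b_{m,n})
  = d^{m+n}[K:\QQ]\hhat_\f(\a)+O(1)
\]
of Remark~\ref{remark:localvsglobal}, which has the same order of magnitude as $\log\Norm_{K/\QQ}\gA_{m,n}$ itself and so does not by itself prevent the part of $\gA_{m,n}$ supported on primes that occurred among the earlier terms from absorbing almost all of $\Norm_{K/\QQ}\gA_{m,n}$. To finish, one would need a genuinely new quantitative input---presumably an averaging or counting estimate showing that, for all but finitely many~$(m,n)$, the primes already dividing some $\gA_{i,j}$ with $i<m$ or $j<n$ cannot collectively account for more than a fixed fraction of $\Norm_{K/\QQ}\gA_{m,n}$. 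Producing such an input, rather than the growth estimate in step~\textup{(1)}, is in our view the heart of the problem, and it is why we record this result only as a conjecture.
\end{remark}
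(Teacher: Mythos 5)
This statement is Conjecture~\ref{conj:strongconj}: the paper offers no proof of it, so there is nothing to compare your argument against, and you are right not to claim one. Your sketch is consistent with the paper's own assessment of the problem. In particular, your step~(1) matches Remark~\ref{remark:gwandering}, where the authors note that the growth condition can be proved via~\cite[Theorem~E]{MR1240603} because the argument of Lemma~\ref{lemma:htvslognorm} does not require the target point to be periodic; and your identification of step~(2) as the real obstruction is exactly the paper's: there is no dynamical analog of the classical identity $\ord_\gp(\gA_{kr})=\ord_\gp(\gA_r)+\ord_\gp(k)$, primes can reappear to wildly larger powers (see the example $\f(z)=p-z+p^ez^2$, $\a=0$ following Conjecture~\ref{conj:weakconj}, and Benedetto's $p$-adic examples in Remark~\ref{remark:localvsglobal}), so the mechanism $\gE_{n,S}=\gA_{n-1,S}^{e}$ with $e<d$ that closes the proof of Theorem~\ref{thm:dynBZ} has no counterpart here.

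Two caveats on your step~(1). First, ``$\f$ is not of polynomial type at $\f^m(\a)$'' stretches the paper's definition, which is made only for periodic points; the hypothesis actually needed for~\cite[Theorem~E]{MR1240603} is that the target is not an exceptional (totally ramified periodic) point, which does hold for a wandering target, so the point is harmless but should be phrased that way. Second, the uniformity in~$m$ that you invoke---a Roth-type estimate whose error is controlled in terms of the height of the moving target $\f^m(\a)$---is not supplied by Theorem~E as stated and is itself an open quantitative strengthening; since the Zsigmondy set here ranges over pairs $(m,n)$, including families like $(m,1)$ with $m\to\infty$, even your step~(1) is not yet established for the doubly indexed sequence. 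None of this is a gap in a claimed proof, since you claim none; it simply confirms that both ingredients, not only step~(2), currently require new input, which is why the paper records the statement as a conjecture.
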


\begin{remark}
In the classical multiplicative and elliptic Zsigmondy theorems, every
prime divides some term of the sequence, but this is (probably) not
true for (most) dynamically defined sequences.  In some nontrivial
dynamical cases, various authors~\cite{jones:denprimdiv,MR805714,MR813379}
have proven that
\[
  \bigl\{\gp:\gp\mid\gA_n~\text{for some $n$}\bigr\}
\]
is a set of density~$0$.  See also~\cite{MR917803} for a weak lower
bound on the number of primes in this set.
%% Shparlinski's result implies that for polynomials in Q[x], the
%% number of p < x dividing some A_n is >> loglog x.
\par
On the other hand, it is clear that every prime divides at least one
term in the doubly indexed dynamical sequence~$\gA_{m,n}$ defined
by~\eqref{eqn:fmnafma}.  In particular, if~$\f$ has good reduction
at~$\gp$, then~$\gp\mid\gA_{m,n}$ if and only if the orbit of~$\a$
mdoulo~$\gp$ has a tail of length~$m$ and a cycle of length~$n$.
\end{remark}

%%%%%%%%%%%%%%%%%%%%%%%%%%%%%%%%%%%%%%%%%%%%%%%%%%%%%%%%%%%%%%%%%%%%%%%%
% Bibliography
%%%%%%%%%%%%%%%%%%%%%%%%%%%%%%%%%%%%%%%%%%%%%%%%%%%%%%%%%%%%%%%%%%%%%%%%

%% \bibliographystyle{abbrv}
%% \bibliography{LocalGlobalDynamics}

\begin{thebibliography}{10}

\bibitem{arxiv0601046}
M.~Baker.
\newblock A finiteness theorem for canonical heights attached to rational maps
  over function fields, 2005.
\newblock \url{ArXiv:math.NT/0601046}.

\bibitem{bang}
A.~S. Bang.
\newblock Taltheoretiske {U}ndersogelser.
\newblock {\em Tidsskrift Mat.}, 4(5):70--80, 130--137, 1886.

\bibitem{MR1941304}
R.~L. Benedetto.
\newblock Examples of wandering domains in {$p$}-adic polynomial dynamics.
\newblock {\em C.~R.~Math. Acad. Sci. Paris}, 335(7):615--620, 2002.

\bibitem{arxiv0510444}
R.~L. Benedetto.
\newblock Heights and preperiodic points of polynomials over function fields.
\newblock {\em Int. Math. Res. Not.}, (62):3855--3866, 2005.

\bibitem{MR2285248}
R.~L. Benedetto.
\newblock Wandering domains in non-{A}rchimedean polynomial dynamics.
\newblock {\em Bull. London Math. Soc.}, 38(6):937--950, 2006.

\bibitem{MR1863855}
Y.~Bilu, G.~Hanrot, and P.~M. Voutier.
\newblock Existence of primitive divisors of {L}ucas and {L}ehmer numbers.
\newblock {\em J. Reine Angew. Math.}, 539:75--122, 2001.
\newblock With an appendix by M. Mignotte.

\bibitem{MR1503541}
G.~D. Birkhoff and H.~S. Vandiver.
\newblock On the integral divisors of {$a\sp n-b\sp n$}.
\newblock {\em Ann. of Math. (2)}, 5(4):173--180, 1904.

\bibitem{callsilv:htonvariety}
G.~S. Call and J.~H. Silverman.
\newblock Canonical heights on varieties with morphisms.
\newblock {\em Compositio Math.}, 89(2):163--205, 1993.

\bibitem{MR1502458}
R.~D. Carmichael.
\newblock On the numerical factors of the arithmetic forms {$\alpha\sp
  n\pm\beta\sp n$}.
\newblock {\em Ann. of Math. (2)}, 15(1-4):30--48, 1913/14.

\bibitem{MR2220263}
G.~Everest, G.~Mclaren, and T.~Ward.
\newblock Primitive divisors of elliptic divisibility sequences.
\newblock {\em J. Number Theory}, 118(1):71--89, 2006.

\bibitem{flatters07}
A.~Flatters.
\newblock Arithmetic dynamics of rational maps.
\newblock preliminary manuscript, July 9, 2007.

\bibitem{arxivNT0609120}
D.~Ghioca and T.~Tucker.
\newblock Equidistribution and integral points for {D}rinfeld modules, 2006.
\newblock \url{ArXiv:math.NT.0609120}.

\bibitem{MR1745599}
M.~Hindry and J.~H. Silverman.
\newblock {\em Diophantine Geometry: An Introduction}, volume 201 of {\em
  Graduate Texts in Mathematics}.
\newblock Springer-Verlag, New York, 2000.

\bibitem{hsia:drinfeld}
L.-C. Hsia.
\newblock On the reduction of a non-torsion point of a {D}rinfeld module, 2006.
\newblock preprint, NSC 90-2115-M-008-016.

\bibitem{IngramEDSoCC}
P.~Ingram.
\newblock Elliptic divisibility sequences over certain curves.
\newblock {\em J. Number Theory}, 123(2):473--486, 2007.

\bibitem{ingramsilverman}
P.~Ingram and J.~H. Silverman.
\newblock Uniform estimates for primitive divisors in elliptic divisibility
  sequences.
\newblock preprint, December 2006.

\bibitem{jones:denprimdiv}
R.~Jones.
\newblock The density of prime divisors in the arithmetic dynamics of quadratic
  polynomials, 2006.
\newblock \url{ArXiv:math.NT/0612415 }.

\bibitem{LangDG}
S.~Lang.
\newblock {\em Fundamentals of Diophantine Geometry}.
\newblock Springer-Verlag, New York, 1983.

\bibitem{MR805714}
R.~W.~K. Odoni.
\newblock The {G}alois theory of iterates and composites of polynomials.
\newblock {\em Proc. London Math. Soc. (3)}, 51(3):385--414, 1985.

\bibitem{MR813379}
R.~W.~K. Odoni.
\newblock On the prime divisors of the sequence {$w\sb {n+1}=1+w\sb 1\cdots
  w\sb n$}.
\newblock {\em J. London Math. Soc. (2)}, 32(1):1--11, 1985.

\bibitem{MR0223330}
L.~P. Postnikova and A.~Schinzel.
\newblock Primitive divisors of the expression {$a\sp{n}-b\sp{n}$} in algebraic
  number fields.
\newblock {\em Mat. Sb. (N.S.)}, 75 (117):171--177, 1968.

\bibitem{rice07}
B.~Rice.
\newblock Primitive prime divisors in polynomial arithmetic dynamics.
\newblock {\em Integers (electronic)}, 7:A26, 2007.
\newblock 16 pages.

\bibitem{MR0344221}
A.~Schinzel.
\newblock Primitive divisors of the expression {$A\sp{n}-B\sp{n}$} in algebraic
  number fields.
\newblock {\em J. Reine Angew. Math.}, 268/269:27--33, 1974.

\bibitem{MR917803}
I.~E. Shparlinski{\u\i}.
\newblock The number of different prime divisors of recurrent sequences.
\newblock {\em Mat. Zametki}, 42(4):494--507, 622, 1987.

\bibitem{MR961918}
J.~H. Silverman.
\newblock Wieferich's criterion and the {$abc$}-conjecture.
\newblock {\em J. Number Theory}, 30(2):226--237, 1988.

\bibitem{MR1329092}
J.~H. Silverman.
\newblock {\em The Arithmetic of Elliptic Curves}, volume 106 of {\em Graduate
  Texts in Mathematics}.
\newblock Springer-Verlag, New York, 1992.

\bibitem{MR1240603}
J.~H. Silverman.
\newblock Integer points, {D}iophantine approximation, and iteration of
  rational maps.
\newblock {\em Duke Math. J.}, 71(3):793--829, 1993.

\bibitem{silverman:ads}
J.~H. Silverman.
\newblock {\em The Arithmetic of Dynamical Systems}, volume 241 of {\em
  Graduate Texts in Mathematics}.
\newblock Springer-Verlag, New York, 2007.

\bibitem{MR0023275}
M.~Ward.
\newblock Memoir on elliptic divisibility sequences.
\newblock {\em Amer. J. Math.}, 70:31--74, 1948.

\bibitem{MR1546236}
K.~Zsigmondy.
\newblock Zur {T}heorie der {P}otenzreste.
\newblock {\em Monatsh. Math. Phys.}, 3(1):265--284, 1892.

\end{thebibliography}

\end{document}